\theoremstyle{plain}
\newtheorem{theorem}{Theorem}[section]
\newtheorem*{theorem*}{Theorem}
\newtheorem{lemma}[theorem]{Lemma}
\newtheorem{corollary}[theorem]{Corollary}
\newtheorem{proposition}[theorem]{Proposition}
\theoremstyle{definition}
\newtheorem{example}[theorem]{Example}
\newtheorem{definition}[theorem]{Definition}
\theoremstyle{remark}
\newtheorem{remark}[theorem]{Remark}
\def\core{{\mathrm{core}}}
\def\lk{{\mathrm{lk}}}
\def\st{{\mathrm{st}}}
\def\Loc{{\mathrm{Loc}}}
\def\Crit{{\mathrm{Crit}}}
\def\QuotientClass{{\mathrm{QuotientClass}}}
\def\X{{\mathcal X}}
\def\K{{\mathcal K}}
\def\J{{\mathcal J}}
\def\K{{\mathcal K}}
\def\Q{{\mathcal Q}}
\def\RR{{\mathbb R}}
\def\ZZ{{\mathbb Z}}
\newcommand\Nplusonedef{\hspace{2 pt}\diagup\hspace{-4.8 pt} \searrow\hspace{-9pt}^{^{N+1}} \hspace{5 pt}} 
\newcommand\Ndef{\hspace{2 pt}\diagup\hspace{-4.8 pt} \searrow\hspace{-9pt}^{^{N}} \hspace{5 pt}}
\newcommand\threedef{\hspace{2 pt}\diagup\hspace{-5 pt} \searrow\hspace{-8pt}^3 \hspace{5 pt}}
\newcommand\ce{{\hspace{2.4 pt} \searrow\hspace{-8 pt}^e \hspace{5 pt}}}
\newcommand\co{{\hspace{2 pt}\searrow \hspace{3 pt}}}
\newcommand\ex{{\nearrow \hspace{3 pt}}}
\newcommand\esc{{\searrow\hspace{-6 pt}\searrow\hspace{-8 pt}^e \hspace{5 pt}}}
\newcommand\sco{{\searrow \hspace{-6 pt}\searrow\hspace{3 pt}}}
\begin{document}

\title{Strong discrete Morse theory}

\author{Ximena Fernández}
\address{Department of Mathematics,  City St George's University of London, and Mathematical Institute, University of Oxford,  United Kingdom}
\email{ximena.fernandez@city.ac.uk}

\keywords{Discrete Morse theory, strong homotopy theory, simplicial complexes, posets, collapses}


\subjclass[2020]{55U10 (primary), 57Q05, 05E45, 68W05 (secondary)}

\begin{abstract}
The purpose of this work is to develop a version of Forman's discrete Morse theory for simplicial complexes, based on \textit{internal strong collapses}. Classical discrete Morse theory can be viewed as a generalization of Whitehead’s collapses, where each Morse function on a simplicial complex $K$ defines a sequence of elementary internal collapses. This reduction guarantees the existence of a CW-complex that is homotopy equivalent to $K$, with cells corresponding to the critical simplices of the Morse function. However, this approach lacks an explicit combinatorial description of the attaching maps, which limits the reconstruction of the homotopy type of $K$.
By restricting discrete Morse functions to those induced by total orders on the vertices, we develop a \textit{strong discrete Morse theory}, generalizing the strong collapses introduced by Barmak and Minian. We show that, in this setting, the resulting reduced CW-complex is regular, enabling us to recover its homotopy type combinatorially. We also provide an algorithm to compute this reduction and apply it to obtain efficient structures for complexes in the library of triangulations by Benedetti and Lutz.
\end{abstract}

\maketitle

\section{Introduction}

The combinatorial analysis of homotopy types dates back to the 1940s with Whitehead’s simple homotopy theory~\cite{whitehead_39, whitehead_41, whitehead_49_1, whitehead_50}. In an effort to describe homotopy equivalences through rigid transformations of combinatorial structures, Whitehead introduced collapses and expansions for finite simplicial complexes. These moves were inspired by Tietze transformations for group presentations. While Tietze’s theory provides a complete set of transformations between equivalent presentations, Whitehead’s simple homotopy theory is more restrictive and fails to capture all homotopy deformations. Nevertheless, it remains a powerful combinatorial framework, influencing classical problems~\cite{akbulut_kirby_85, milnor_66}; such as Zeeman’s conjecture~\cite{zeeman_63}; the Andrews--Curtis conjecture~\cite{andrews_curtis_65, hog-angeloni_metzler_93}; and the Poincaré conjecture~\cite{poincare_1904, zeeman_poincare_63}; as well as modern computational approaches to analysing complexes and data~\cite{bauer17, benedetti_24, Robins_2011}.

Whitehead's elementary deformations are called \textit{collapses} (and their reverse moves, \textit{expansions}), which rely on the concept of a \textit{free face}. If $\tau$ is the unique simplex of a simplicial complex $K$ properly containing $\sigma$, then both $\sigma$ and $\tau$ can be removed from $K$ while preserving its homotopy type and simplicial structure (see Figure~\ref{fig:collapse}, left). However, many simplicial complexes lack free faces (e.g., triangulations of closed manifolds). A more general concept is that of an \textit{internal free face}, leading to an \textit{internal collapse}~\cite{kaczynski1998, kozlov_20, fernandez_24}. Here, $\sigma$ becomes a free face of $\tau$ in a \textit{subcomplex} $L \subseteq K$. To preserve the homotopy type of $K$ after removing $\sigma$ and $\tau$, the simplices in $K \smallsetminus L$ must be re-attached, often losing the simplicial structure (see Figure~\ref{fig:collapse}, right). In general, there is no explicit or computable method to determine such attachments in arbitrary dimensions.

\begin{figure}[htb!]
\hspace{-5pt}\includegraphics[width=0.45\textwidth]{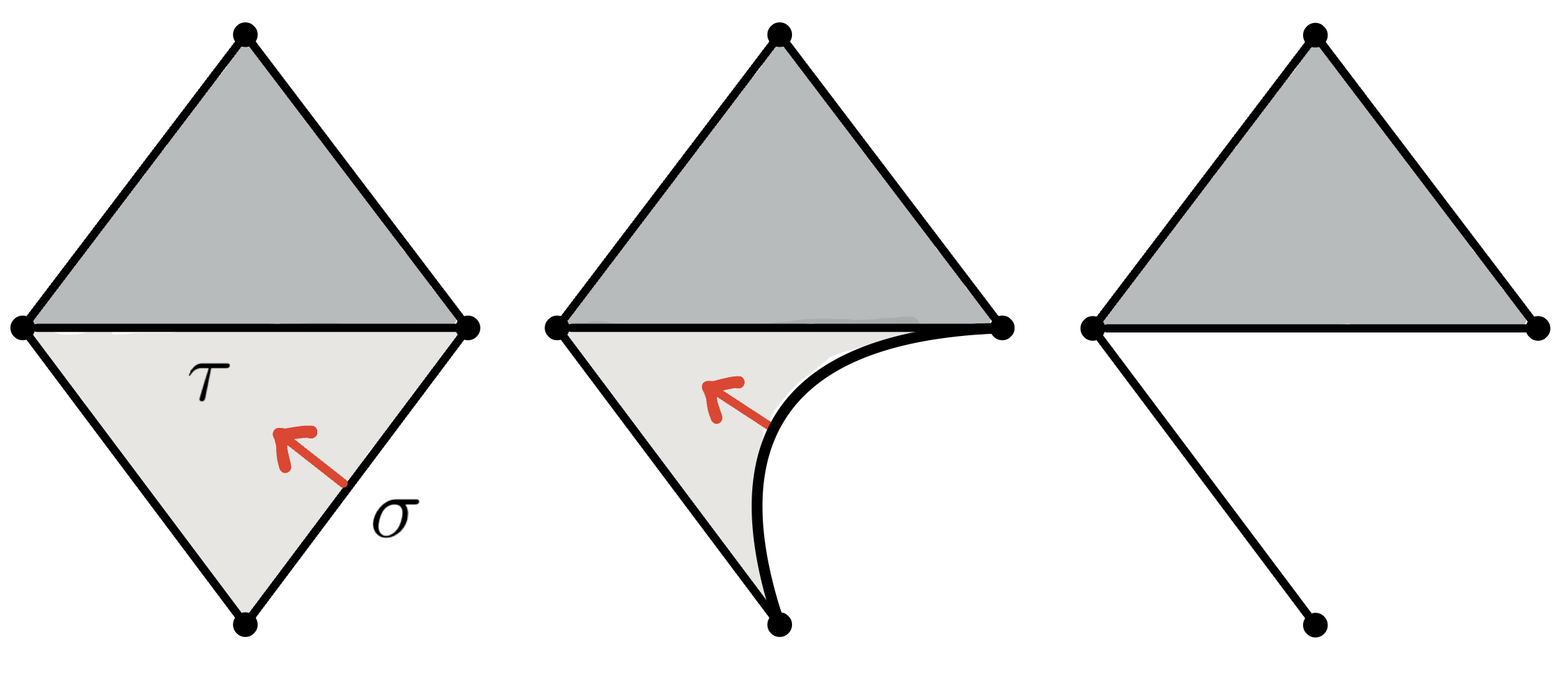}\hspace{40pt}\includegraphics[width=0.45\textwidth]{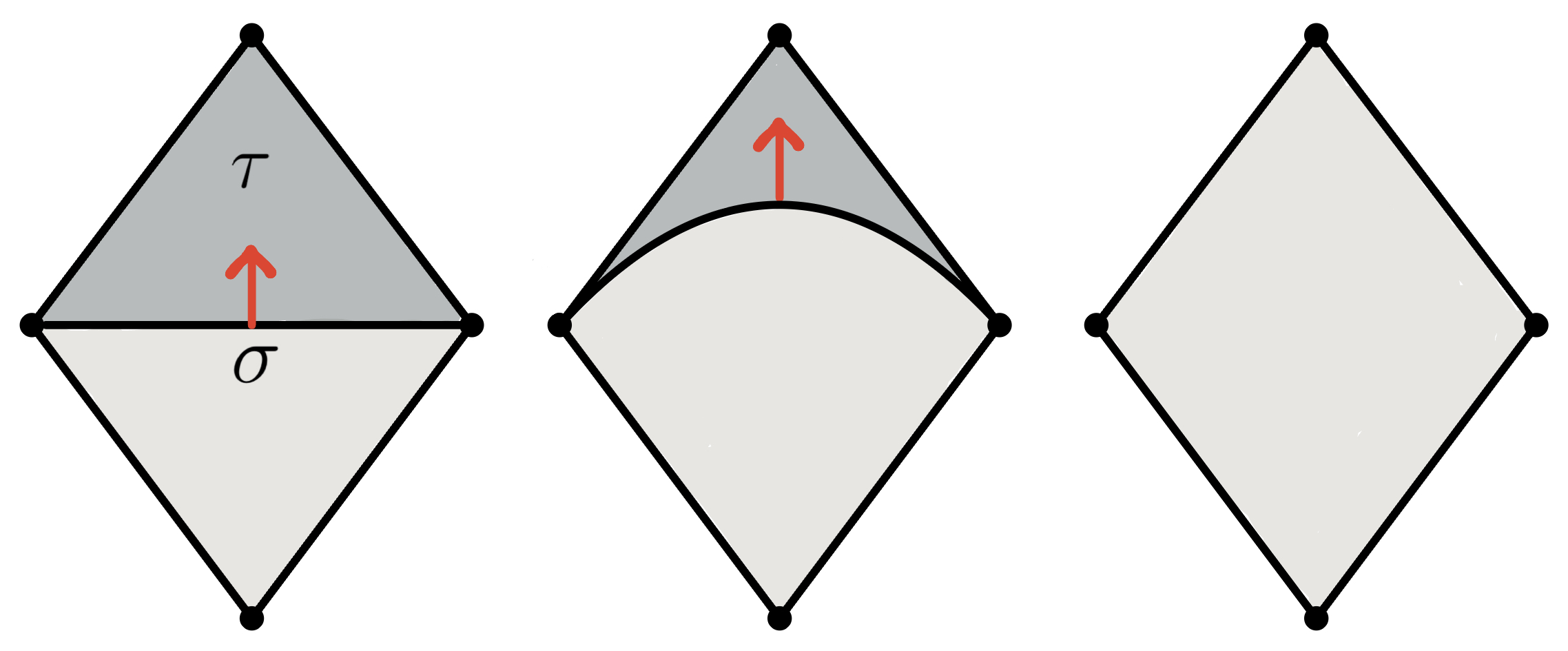}
\caption{Collapse (left) vs internal  collapse (right).}
\label{fig:collapse}
\end{figure}

Discrete Morse theory~\cite{forman_98, forman_02, kozlov_08}, developed by Forman in the 1990s as a combinatorial analogue of smooth Morse theory for manifolds, is a tool for studying the homotopy type of simplicial complexes via real-valued functions defined on them. Concretely, given a simplicial complex $K$ and a discrete Morse function $f \colon K \to \mathbb{R}$, there exists a reduced CW-complex built from the critical simplices of $f$, denoted by $\core_f(K)$, which is homotopy equivalent to $K$.
Moreover, a recent result establishes a connection between discrete Morse theory and simple homotopy theory: if $K$ has dimension $N$, then there exists a sequence of collapses and expansions from $K$ to $\core_f(K)$ through intermediate complexes of dimension at most $N+1$ (also called an $(N+1)$-deformation and denoted by $K \Nplusonedef \core_f(K)$)~\cite{fernandez_24}. This result relies on the fact that discrete Morse functions are equivalent to well-ordered sequences of internal collapses, which encode the deformation from $K$ to $\core_f(K)$~\cite{fernandez_24, kozlov_20}.
The construction of the reduced complex $\core_f(K)$, however, often sacrifices the original simplicial and combinatorial structure of $K$. The resulting attaching maps tend to be more intricate and are not explicitly determined, transferring the topological complexity of the space from many simplices with simple attaching maps to fewer cells (corresponding to the critical simplices) whose attaching maps are more complex.
This work focuses on a particular case of discrete Morse theory in which it is possible to provide an explicit and computable method for describing these attaching maps, even in higher dimensions.

In 2012, a strong version of homotopy theory was developed by Barmak and Minian~\cite{barmak_minian_12}. This approach builds upon simple homotopy theory by introducing  \textit{strong collapses} of simplicial complexes. Inspired by vertex reductions on posets~\cite{stong66}, strong collapses involve eliminating vertices $v$ (called \textit{dominated vertices}) whose link is a simplicial cone. In general, removing (the open star of) vertices with a contractible link preserves the homotopy type, as a consequence of the Gluing Theorem~\cite{brown_68}. Coned links, however, offer a systematic way to simple collapse $K$ to the subcomplex $K \smallsetminus v$ (see Figure~\ref{fig:strong_collapse}, left).

Strong collapses, as collections of simultaneous collapses, are highly efficient. Unlike Whitehead's collapses, strong collapses always yield the same minimal irreducible subcomplex --- the \textit{strong core} of $K$. However, the strong core of $K$ often coincides with $K$ itself, as many simplicial complexes lack dominated vertices. To overcome this limitation, one can consider performing strong collapses \textit{internally}, where domination is restricted to a subcomplex of $K$ (see Figure~\ref{fig:strong_collapse}, right). In such cases, a strategy must again be developed to recompute the attaching maps of the remaining cells, potentially compromising the simplicial structure of the reduced complex.

\begin{figure}[htb!]
\hspace{-5pt}\includegraphics[width=0.45\textwidth]{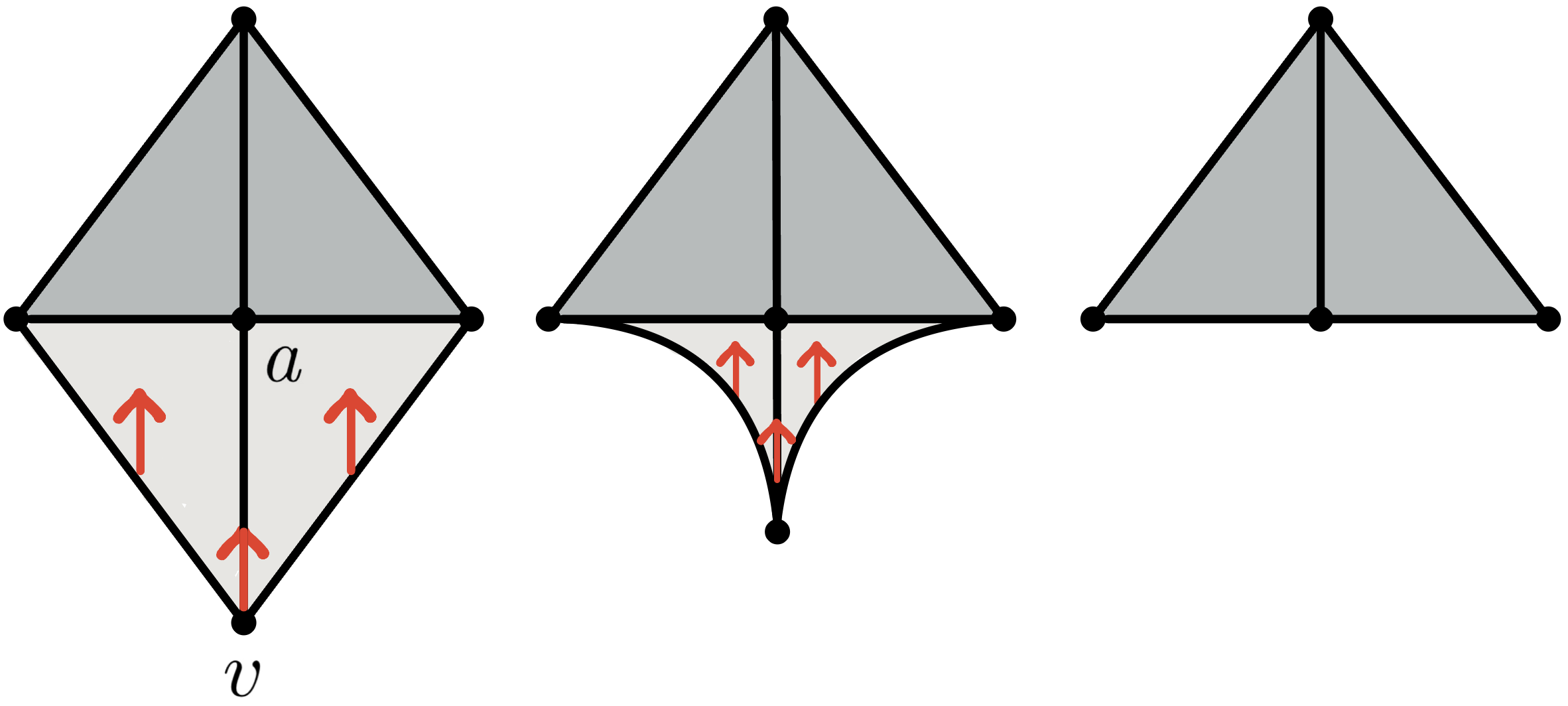}\hspace{40pt}\includegraphics[width=0.45\textwidth]{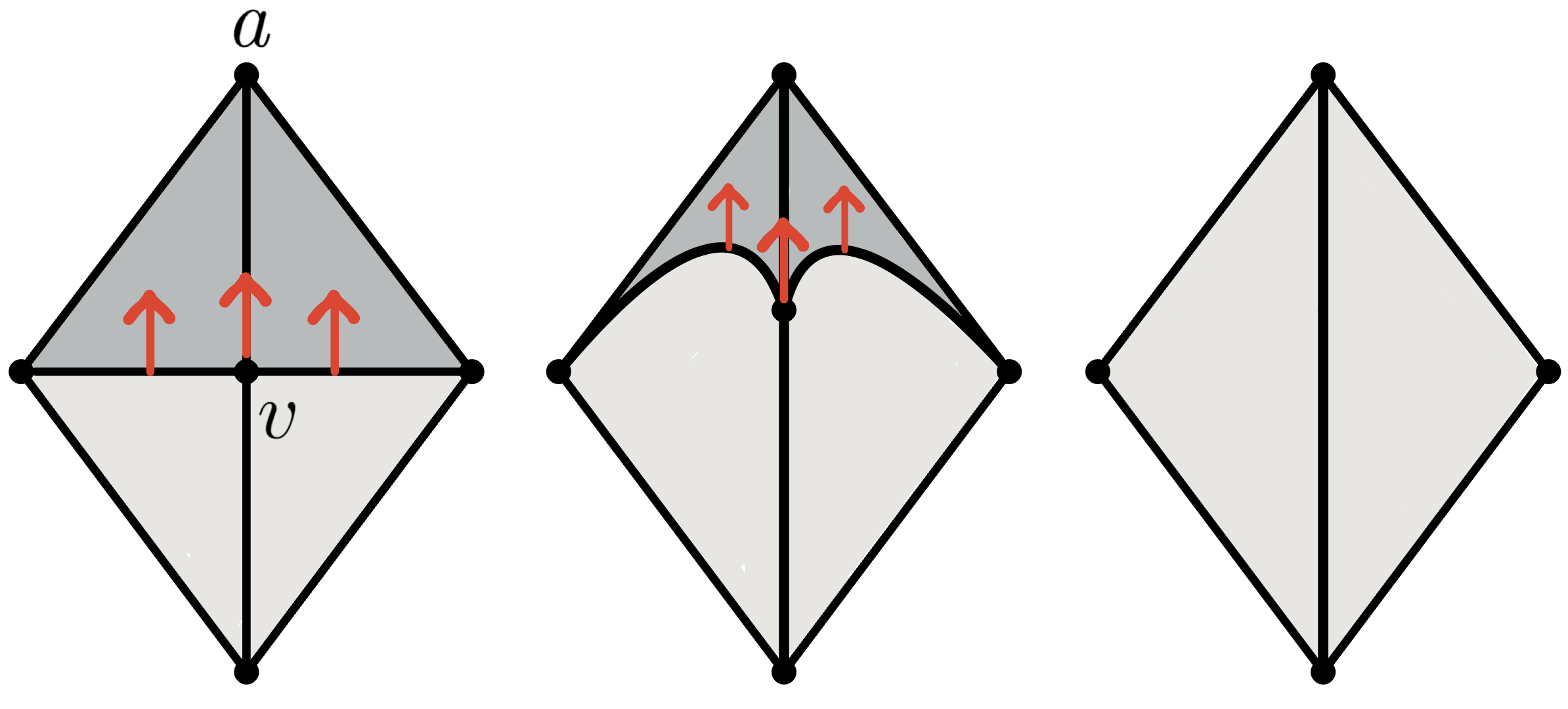}
\caption{Strong collapse (left) vs internal strong collapse (right).}
\label{fig:strong_collapse}
\end{figure}

In this work, we establish a method for studying \textit{internal strong collapses} and prove that the reduced CW-complex obtained from any sequence of such collapses is always \textit{regular} (i.e., its attaching maps are homeomorphisms onto their images). As a consequence, the CW-structure of the reduced complex is fully combinatorially determined by the incidences of its cells.
More broadly, we develop the theoretical foundations of \textit{strong Morse theory}, a version of discrete Morse theory in which discrete Morse functions correspond to sequences of internal strong collapses. We propose an efficient algorithm for computing the reduced complex $\core_f(K)$ for such discrete Morse functions $f$. To demonstrate its applicability, we use this method to identify more efficient regular models of the simplicial complexes in the Library of Triangulations~\cite{benedetti_24, Triangulation_Library, benedetti_14} by Benedetti and Lutz.

\subsection{Motivation and related work}
Classical discrete Morse theory does not provide an explicit description of the reduced homotopy equivalent CW-complex. A discrete Morse function on a simplicial complex $K$ induces a collection of critical simplices that correspond to the cells in the reduced CW-complex, along with information about their incidences, which is sufficient to determine the homology of the simplicial complex $K$. However, recovering the homotopy type of $K$ from the reduced CW-complex requires a more refined understanding of the attaching maps. Hence, except in special cases---such as when the homotopy type is fully determined by the list of critical cells (e.g., wedges of spheres)---the algorithmic reconstruction of homotopy types from discrete Morse data remains an open problem.

Some progress has been made toward addressing this challenge. In \cite{fernandez_24}, the author develops a combinatorial framework to describe the attaching maps of the reduced CW-complex for 2-dimensional simplicial complexes equipped with a discrete Morse function with a single critical 0-simplex. The approach makes strong use of the correspondence between 2-complexes and group presentations. However, it does not extend to higher dimensions, as the homotopy types of $N$-dimensional complexes with $N > 2$ generally cannot be fully captured combinatorially via group presentations. 
Rather than attempting to recover arbitrary attaching maps, the present work focuses on cases in which discrete Morse functions yield reduced complexes with particularly simple attaching maps---namely, homeomorphisms.

Our central goal  is to develop a combinatorial and computable framework for a strong discrete Morse theory that allows for the explicit reconstruction of the reduced CW-complex. By restricting Morse functions to those induced by internal strong collapses, we prove that the resulting reduced complex is always a regular CW-complex, and hence its homotopy type can be entirely determined by the poset of incidence cells. Moreover, as strong Morse theory is a restricted version of classical discrete Morse theory, the reduced complex $(N+1)$-deforms to the original simplicial complex $K$, where $N$ is the dimension of  $K$. Consequently, this theory also provides a computational method to perform $(N+1)$-dimensional deformations.

In \cite{fernandez_ternero_20}, the authors also establish connections between discrete Morse theory and strong collapses, focusing on general discrete Morse functions. Their work introduces a classification of internal collapses into critical and regular pairs, providing criteria to determine whether a sub-sequence of internal collapses is induced by a strong collapse. By contrast, our work only focuses on the specific class of Morse functions derived from only internal strong collapses.

Our work builds on the main principles of Bestvina–Brady Morse theory~\cite{bestvina_08, bestvinabrady_97}. In that framework, Morse functions are piecewise-linear (PL) functions defined on affine complexes, the critical points are the vertices, and changes in the topology of sublevel sets are studied through the \textit{descending links} of vertices.
Restricting to simplicial complexes and functions defined on their vertices, our theory refines this perspective by classifying vertices according to the structure of their descending links: either as \textit{descending dominated vertices} (their descending links are simplicial cones, and induce internal strong collapses) or as \textit{strong critical vertices}.
While Bestvina–Brady Morse theory studies local changes in the homotopy type of sublevel sets, our approach also aims at the combinatorial and global reconstruction of a reduced complex induced by these local changes. 

A complementary perspective is presented in \cite{nanda_19, nanda_18}, where the authors encode the homotopy type of a simplicial complex endowed with a discrete Morse function into a combinatorially defined \textit{flow category}. The classifying space of the flow category recovers the homotopy type of the original complex. However, these categories are often large and computationally challenging, limiting their application in algorithmic homotopy reconstruction.

\subsection{Main results and outline}

Let $K$ be a finite simplicial complex, and let $g \colon V(K) \to \mathbb{R}$ be a real-valued function on its set of vertices. The function $g$ induces a filtration of $K$ by subcomplexes, where each subcomplex consists of all simplices whose vertices lie below a given threshold in $g$. For each vertex $v \in V(K)$, we consider the first subcomplex in the filtration that contains $v$, and define its \emph{descending open star} as the collection of simplices in that subcomplex that contain $v$. A vertex is called \emph{strong critical} if, at the filtration step where it first appears, it is not dominated by any vertex from an earlier subcomplex.

This filtration of $K$ leads to a \textit{strong homotopy} analogue of the classical discrete Morse lemmas (Lemma \ref{lemma morse}). Intervals in the filtration without strong critical vertices correspond to strong collapses, while those with strong critical vertices introduce changes in the homotopy type, determined by the simplices in its descending open stars (Lemma \ref{lemma strong morse}). This reduction process produces a sequence of internal strong collapses, resulting in a reduced complex, called the \textit{strong internal core} and denoted by $\core_g(K)$. 
Notably, we prove that this reduced CW-complex is \textit{regular}, that is, its attaching maps are homeomorphisms with its image.

Our main result in strong Morse theory is as follows.

\begin{theorem*}[Strong Morse Theory] 
Let $K$ be a finite simplicial complex, and let $g \colon V(K) \to \mathbb{R}$ be a real-valued function on the vertex set of $K$. Then $K$ is homotopy equivalent to a \textbf{regular} CW-complex $\core_g(K)$, whose cells are in one-to-one correspondence with the simplices in the descending open stars of the strong critical vertices of $g$. Moreover, if $\dim(K) = N$, then $K \Nplusonedef \core_g(K)$.
\end{theorem*}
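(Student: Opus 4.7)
The plan is to induct along the filtration of $K$ induced by $g$. Without loss of generality $g$ may be assumed injective: otherwise perturb $g$ slightly, noting that small perturbations do not alter the set of strong critical vertices nor their descending open stars (they depend only on the relative order of vertices of adjacent simplices). Enumerate $V(K) = \{v_1, \dots, v_n\}$ in increasing order of $g$-value, and let $K_i$ be the full subcomplex of $K$ spanned by $\{v_1, \dots, v_i\}$. Then $K_i \smallsetminus K_{i-1}$ is precisely the descending open star of $v_i$ in $K_i$.

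At each step I would distinguish two cases. If $v_i$ is not strong critical, then by definition $v_i$ is dominated in $K_i$ by some earlier vertex $v_j$ with $j < i$; Lemma \ref{lemma morse} then asserts that the passage from $K_{i-1}$ to $K_i$ realises an internal strong collapse (so $K_{i-1} \simeq K_i$) and contributes no new cells to the reduction. If $v_i$ is strong critical, then Lemma \ref{lemma strong morse} describes the homotopy change: $K_i$ is obtained from $K_{i-1}$ by attaching cells in bijection with the simplices of the descending open star of $v_i$. Define $\core_g(K)$ by iterating this procedure, only attaching cells at strong critical steps. Homotopy equivalence with $K$ and the cell-counting statement both follow by induction on $i$ by stringing together Lemmas \ref{lemma morse} and \ref{lemma strong morse}.

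The main obstacle is \textbf{regularity}. For a strong critical vertex $v_i$ and a simplex $\sigma = v_i * \tau$ in its descending open star, the closed cell corresponding to $\sigma$ in $\core_g(K_i)$ is attached along the image of $\partial \sigma$ under the composition of internal strong collapses $K_{i-1} \to \core_g(K_{i-1})$ applied to $\tau$, together with the cells of $\core_g(K_i)$ coming from proper subsimplices of $\sigma$ that contain $v_i$. The plan is to prove, by induction on $i$, that this composition, when restricted to any simplex of $K_{i-1}$, is a simplicial isomorphism onto its image. The key observation is that an internal strong collapse at a dominated vertex $v$ replaces $v$ by its dominator on all simplices containing $v$, and on any fixed simplex of $K_{i-1}$ this substitution either leaves the simplex unchanged or sends it isomorphically to another simplex (never identifying two distinct vertices of a single simplex, by the definition of domination). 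Composing across the iterated strong collapses then preserves the simplex structure of $\tau$, so the attaching map of each new cell is a homeomorphism onto a subcomplex of $\core_g(K_i)$; this gives regularity.

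For the final statement, I would invoke directly the result from \cite{fernandez_24}: any discrete Morse function $f$ on an $N$-dimensional complex $K$ yields an $(N+1)$-deformation $K \Nplusonedef \core_f(K)$. Since the Morse function induced by $g$ (via Lemma \ref{lemma strong morse}) is a particular instance of a classical discrete Morse function whose critical simplices are exactly the simplices in the descending open stars of the strong critical vertices, the $(N+1)$-deformation is inherited with no further work.
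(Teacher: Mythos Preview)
Your overall induction along the sublevel filtration, distinguishing descending dominated vertices from strong critical ones and invoking the strong Morse lemma at each step, is exactly the skeleton of the paper's argument. The $(N+1)$-deformation claim is likewise handled the same way, by reducing to the discrete Morse result of \cite{fernandez_24} via the induced matching (though the relevant reference for constructing that matching is Theorem~\ref{strong morse matching}, not Lemma~\ref{lemma strong morse}; similarly your appeal to Lemma~\ref{lemma morse} in the dominated case should be to Lemma~\ref{lemma strong morse}~A).

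The substantive problem is in your regularity argument. You assert that an elementary strong collapse at a dominated vertex $v$ with dominator $a$ ``never identif[ies] two distinct vertices of a single simplex, by the definition of domination.'' This is false. Domination means $\lk(v,K) = a * K_0$, so in particular $\{v,a\}$ is an edge and, more generally, any simplex $\{v,a\} \cup \rho$ with $\rho \in K_0$ contains both $v$ and $a$. On such a simplex the retraction $r_a$ sends $v \mapsto a$ and therefore collapses $\{v,a\} \cup \rho$ onto the strictly lower-dimensional simplex $\{a\} \cup \rho$. Consequently, for a strong critical vertex $v_i$ and a simplex $\sigma = v_i * \tau$ with both some earlier dominated $v_j$ and its dominator $a_j$ in $\tau$, the composite retraction applied to the face $\tau \subset \partial\sigma$ is \emph{not} injective, and your argument that the attaching map of $\sigma$ is a homeomorphism onto its image breaks down.

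This is precisely the hard case the paper isolates as case~(3) in the proof of Theorem~\ref{theo1:  strong internal collapse}. There the boundary sphere $\partial e$ contains a PL $(\ell-1)$-ball $A = |\overline{\st}(v,K\smallsetminus w)|\cap \partial e$ which the retraction collapses onto a PL $(\ell-2)$-ball $B = |\lk(v,K\smallsetminus w)|\cap \partial e$. Showing that the quotient $\partial e/(x\sim r(x))$ is still a sphere is a genuine PL fact, handled via regular neighbourhood theory from Rourke--Sanderson \cite{rourke_sanderson_72}: one extends $r|_A$ to a map $f\colon \partial e \to \partial e$ that is a homeomorphism off $A$, and deduces that the quotient is homeomorphic to $\partial e$. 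No purely simplicial bookkeeping replaces this step; your proposed argument misses it entirely.
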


We also show that this result is a particular case of classical discrete Morse theory (Theorem~\ref{strong morse matching}). More precisely, for every function $g \colon V(K) \to \mathbb{R}$, there exists a discrete Morse function $f \colon K \to \mathbb{R}$ such that the critical simplices of $f$ are exactly the simplices in the descending open stars of the strong critical vertices of $g$. Moreover, the CW-complexes $\core_f(K)$ and $\core_g(K)$ coincide.\footnote{By a slight abuse of notation, we denote both by $\core$ depending on the context: $\core_f(K)$ denotes the internal core obtained from a discrete Morse function $f$ on simplices, and $\core_g(K)$ denotes the strong internal core obtained from a function $g$ on vertices.}

The combinatorial structure of the strong internal core admits a computational interpretation of internal strong collapses, which in turn leads to an efficient algorithm for their computation (Appendix \ref{appendix}, Algorithms \ref{alg:crit} and \ref{alg:internal_core_computation}). An implementation is publicly available at \url{https://github.com/ximenafernandez/Strong-Morse-Theory}. To illustrate the effectiveness of this method, we apply it to examples from the \emph{Library of Triangulations}~\cite{Triangulation_Library}, a repository of challenging simplicial complexes frequently used in the study of homotopy types. The results of the reductions are presented in Tables~\ref{tab:simplicial_complex_reduction_10it} and~\ref{tab:simplicial_complex_reduction_100it}.

Beyond its role in understanding homotopy types, the algorithmic reduction of simplicial complexes also has applications in topological data analysis, where the complexity of persistent homology computations increases rapidly with input size. For instance, computing degree-$d$ persistent homology of the Vietoris–Rips filtration of a point cloud of size $N$ has worst-case complexity $O(N^{3(d+2)})$~\cite{otter15}. The method introduced in~\cite{boissonnat_18} uses strong collapses to simplify Vietoris–Rips filtrations prior to computing persistent homology. Since our results extend algorithmic simplification of complexes via internal strong collapses, they have potential applications in further optimising such filtration-based computations.

\medskip

The remainder of this manuscript is organized as follows.
Section \ref{sec:discrete_morse} presents discrete Morse theory as a generalization of Whitehead's collapses.
In Section \ref{sec:strong morse}, we introduce a strong version of discrete Morse theory, based on strong collapses, which enables the recovery of a regular structure in the reduced CW-complex (the strong internal core).
Section \ref{sec:reconstruction core} provides a combinatorial construction of the strong internal core in terms of posets and acyclic matchings.
Finally, Section \ref{sec:random_reduction} discusses algorithms for the simplification of simplicial complexes and presents experiments on the Library of Triangulations \cite{Triangulation_Library}.
The Appendix details algorithms for constructing the critical poset and a random strong internal core.

\medskip 

\textbf{Notation.} We use $\simeq$ for homotopy equivalences, $\cong$ for homeomorphisms, and $|\cdot|$ for geometric realizations. The symbols $\sim$ and $\approx$ are reserved for relations. The relation $\prec$ denotes the face relation between simplices of consecutive dimension.

\section{Discrete Morse theory as generalized collapses}\label{sec:discrete_morse}
In this section, we revisit discrete Morse theory, framing it as a combinatorial generalization of  collapsibility of simplicial complexes, in the context of Whitehead's simple homotopy theory. The classical foundations of simple homotopy theory can be found in Whitehead's seminal works \cite{whitehead_39, whitehead_41, whitehead_50}, Milnor's influential article \cite{milnor_66}, and  Cohen's textbook \cite{cohen_73}.
Throughout this manuscript, we will restrict our attention to finite simplicial complexes. 

\begin{definition}
Let $K$ and $L$ be  simplicial complexes. There is an \textit{elementary collapse} from $K$ to $L$ if there exist simplices $\sigma, \tau \in K$ such that $L = K \smallsetminus \{\sigma, \tau\}$ and $\tau$ is the unique simplex containing $\sigma$ properly in $K$. In this case, $\sigma$ is called a \textit{free face} of $\tau$. 

A \textit{collapse}, denoted $K \co L$, is a sequence $K = K_n \ce K_{n-1} \ce \dots \ce K_0 = L$ of elementary collapses from $K$ to $L$. The inverse operation is called an \textit{expansion}, denoted $L \ex K$. If $K \co L$ we say that $L$ is a \textit{weak core} of $K$, and if $L$ has no free faces, we say it is \textit{minimal}.

There is an \textit{$N$-deformation}, denoted $K \Ndef L$, between $K$ and $L$ if there is a sequence $K = K_n, K_{n-1}, \dots, K_0 = L$ such that $K_i \co K_{i-1}$ or $K_i \ex K_{i-1}$, and $\dim(K_i) \leq N$ for all $i$. If there exists an $N$-deformation for some $N \in \mathbb{N}$ between $K$ and $L$, it is said that they have the same \textit{simple homotopy type}.
\end{definition}

Computing a minimal weak core of a simplicial complex provides a computational method for reducing its number of simplices while preserving its homotopy type. However, this approach is quite restrictive. For instance, many simplicial complexes lack free faces (e.g., triangulations of closed manifolds or famous examples such as the Dunce Hat \cite{zeeman_63} and Bing’s House \cite{bing_52}). Furthermore, a simplicial complex may have different (non-isomorphic) weak cores. For instance, if $D$ is any triangulation of the Dunce Hat, then $D \times I$ collapses to both $D$ and the singleton $*$. Thus, the order of collapses matters.
A more general approach is provided by $N$-deformations (e.g., the above shows that $D \threedef *$, even though its minimal weak core is $D$ itself). Nonetheless, the existence of an $N$-deformation between two simplicial complexes not computable, as this would imply being able to algorithmically determine the contractibility of spaces.

\medskip

Discrete Morse theory \cite{forman_02, forman_98}, inspired by its smooth counterpart \cite{milnor_63}, provides a more general combinatorial framework for simplifying the structure of a simplicial complex while preserving its homotopy type. Here, deformations are encoded in terms of discrete Morse functions. 

\begin{definition}\label{def:Morse_function}
Let $K$ be a simplicial complex. A function $f \colon K \to \mathbb{R}$ defined on the simplices of $K$ is called a \textit{discrete Morse function} if, for every simplex $\sigma \in K$, the set
\[
M(\sigma) = \{ \tau \prec \sigma \colon f(\tau) \geq f(\sigma) \}
\cup \{ \tau \succ \sigma \colon f(\tau) \leq f(\sigma) \}
\]
contains at most one element. A simplex $\sigma \in K$ is said to be \textit{critical} if $M(\sigma) = \varnothing$.
\end{definition}

To every simplicial complex $K$, we associate its \textit{face poset} $\X(K)$, which is the poset of simplices of $K$ ordered by the face relation.
Each Morse function determines a pairing of simplices $M = \big\{\{\sigma, \tau\} \colon M(\sigma) = \{\tau\}\big\}$. It can be shown that $M$ is a \textit{matching} on the set of simplices of $K$. Moreover, the quotient of $\X(K)$ obtained by identifying matched simplices inherits a poset structure (see Section \ref{sec:reconstruction core} for more details). This matching is referred to as an \textit{acyclic matching} 
Chari \cite{chari_00} proved that acyclic matchings are in correspondence with discrete Morse functions.

The main theorem in discrete Morse theory establishes the connection between critical simplices of discrete Morse functions and the homotopy type of the simplicial complex.
\begin{theorem}[Forman  \cite{fernandez_24, forman_98, forman_02}] \label{discrete Morse theorem}
Let $K$ be a finite simplicial complex of dimension $N$, and let $f\colon K \to \RR$ be a discrete Morse function. Then, there is an $(N+1)$-deformation from $K$ to a CW-complex with exactly one cell of dimension $d$ for every critical $d$-simplex of $f$. 
\end{theorem}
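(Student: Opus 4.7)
The plan is to combine Chari's correspondence between discrete Morse functions and acyclic matchings with the observation that each elementary internal collapse can be realized as a simple-homotopy deformation through complexes of dimension at most $N+1$.

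First I would invoke Chari's theorem to translate the Morse function $f$ into an acyclic matching $M$ on the face poset $\X(K)$, whose unmatched elements are precisely the critical simplices of $f$. Acyclicity yields a linear extension of $M$: an enumeration of the matched pairs $\{\sigma_1\prec\tau_1\},\ldots,\{\sigma_m\prec\tau_m\}$ such that, for every $i$, the face $\sigma_i$ is a free face of $\tau_i$ inside the subcomplex $L_i\subseteq K_{i-1}$ generated by $\sigma_i$, $\tau_i$, and the pairs already removed. Processing the pairs in order produces a sequence
\[
K = K_0 \se K_1 \se \cdots \se K_m = \core_f(K),
\]
of elementary internal collapses, where $K_m$ carries exactly one $d$-cell for every critical $d$-simplex of $f$ and the attaching maps of these cells are those induced by the gradient flow of $f$.

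Second, I would upgrade each elementary internal collapse $K_{i-1}\se K_i$ to an $(N+1)$-deformation in the sense of Whitehead, following the strategy of \cite{fernandez_24}. The idea is to thicken the internal free face $\sigma_i$ by attaching an auxiliary mapping-cylinder-type cell that turns $\sigma_i$ into a genuine free face; a classical elementary collapse then removes $\sigma_i$ and $\tau_i$; finally, the auxiliary structure is collapsed back to yield $K_i$. Since $\dim(K_{i-1})\leq N$, the auxiliary cell raises the dimension by at most one, so every intermediate complex satisfies $\dim\leq N+1$. Concatenating these local $(N+1)$-deformations for $i=1,\ldots,m$ gives $K\Nplusonedef \core_f(K)$.

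The main obstacle I expect is the second step: producing a canonical factorization of an internal collapse into Whitehead collapses and expansions while keeping control of the dimension. The subtle point is that an internal collapse re-attaches the cells of $K_{i-1}\smallsetminus L_i$ via maps that are not simplicial in general, so the factorization must be carried out in the CW-category (or on a suitable subdivision) and must remain compatible with the order in which the subsequent matched pairs will be processed. Once this local $(N+1)$-factorization is established, the global statement follows by induction on $m$, with acyclicity of $M$ guaranteeing that the induction is well-defined and that no pair is revisited.
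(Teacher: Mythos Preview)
The paper does not supply its own proof of this theorem: it is stated with citations to \cite{fernandez_24, forman_98, forman_02} and then immediately followed by the ingredients one would use, namely the Morse Lemma (Lemma~\ref{lemma morse}) and the internal-collapse proposition (Proposition~\ref{internal collapse}). Your proposal is correct and follows exactly this route; the only cosmetic difference is that you organise the sequence of internal collapses via Chari's acyclic-matching formulation and a linear extension, whereas the paper's exposition phrases the same sequence through the sublevel filtration $\{K_\alpha\}$ of $f$. Both yield the same ordered list of matched pairs, and your second step (factoring each internal collapse as an $(N{+}1)$-deformation via an auxiliary mapping-cylinder cell) is precisely the content of \cite[Prop.~1.3]{fernandez_24}, which the paper invokes as Proposition~\ref{internal collapse}.

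One small point of phrasing: your description of the subcomplex $L_i$ as ``generated by $\sigma_i$, $\tau_i$, and the pairs already removed'' is slightly garbled, since the removed pairs are no longer present in $K_{i-1}$. What you want is the down-closure in $K$ of $\{\sigma_j,\tau_j : j\ge i\}$, or equivalently the appropriate sublevel complex; in either description $\sigma_i$ is a genuine free face of $\tau_i$ there, which is what drives the elementary internal collapse.
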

We denote by $\core_f(K)$ the CW-complex obtained from the critical simplices of $f$ in Theorem~\ref{discrete Morse theorem}, and refer to it as the \emph{internal core} of $K$ associated to $f$. 
It is important to note that Theorem~\ref{discrete Morse theorem} does not provide an explicit description of how the critical cells are attached along their boundaries in $\core_f(K)$. In \cite[Prop. 1.3, Thm. 1.6]{fernandez_24}, the author offers a recursive construction of the attaching maps for the critical cells, based on the finiteness of $K$; however, the construction is only made explicit for 2-dimensional cells.

\medskip

The deformation from $K$ to $\core_f(K)$ is described locally by the Morse Lemma \ref{lemma morse}. Any discrete Morse function $f \colon K \to \mathbb{R}$ induces a filtration of $K$ by sublevel subcomplexes: for each $\alpha \in \mathbb{R}$,  $K_\alpha$ is the smallest subcomplex of $K$ containing all simplices $\sigma$ with $f(\sigma) \leq \alpha$.
The following result describes the evolution of the homotopy type complex along the filtration: intervals that do not contain critical simplices correspond to collapses, while those containing a single critical simplex account for homotopy changes. 

\begin{lemma}[Thm. 3.3 and 3.4, \cite{forman_98}]\label{lemma morse}
Let $f \colon K \to \mathbb{R}$ be a discrete Morse function.
\begin{enumerate}[A.]
\item If $f^{-1}\big((\alpha, \beta]\big)$ contains no critical simplices, then $K_\beta \co K_\alpha$.
\item If $f^{-1}\big((\alpha, \beta]\big)$ contains exactly one critical $k$-simplex, then $K_\beta \simeq K_\alpha \cup e^k$, where $e^k$ is a $k$-dimensional cell.
\end{enumerate}
\end{lemma}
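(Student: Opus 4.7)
The plan is to prove both parts by analyzing the sublevel filtration $K_\alpha = \bigcup_{f(\tau) \leq \alpha} \overline{\tau}$ through the Morse matching $M = \{\{\sigma, \tau\} : \sigma \prec \tau,\ f(\sigma) \geq f(\tau)\}$ induced by $f$.

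For Part A, I would induct on the number of matched pairs of $M$ lying in $f^{-1}((\alpha, \beta])$. The base case ($0$ pairs) forces $K_\alpha = K_\beta$. For the inductive step, the hypothesis forces every simplex in $f^{-1}((\alpha, \beta])$ to be matched. I would select a pair $\{\sigma, \tau\}$ with $\sigma \prec \tau$ and $f(\tau)$ maximal among all Morse values in the range, and then verify that $\sigma$ is a free face of $\tau$ in $K_\beta$: any other proper coface $\tau' \succ \sigma$ in $K_\beta$ is not matched with $\sigma$, so by the Morse condition $f(\tau') > f(\sigma) \geq f(\tau)$, which---combined with how $K_\beta$ is built---contradicts the maximality of $f(\tau)$. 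This produces an elementary collapse $K_\beta \ce K_\beta \smallsetminus \{\sigma, \tau\}$, after which the inductive hypothesis applies to close the argument.

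For Part B, let $\kappa$ denote the unique critical $k$-simplex in $f^{-1}((\alpha, \beta])$ and pick $\gamma \in (\alpha, f(\kappa))$ just below $f(\kappa)$. Applying Part A to the interval $(\alpha, \gamma]$ gives $K_\gamma \co K_\alpha$, so $K_\gamma \simeq K_\alpha$. Since $\kappa$ is critical, the Morse condition forces $f(\sigma) < f(\kappa)$ for every proper face $\sigma \prec \kappa$, hence $\partial \kappa \subseteq K_\gamma$; attaching $\kappa$ along its boundary then adjoins a $k$-cell, yielding $K_\gamma \cup \kappa \simeq K_\alpha \cup e^k$. All simplices in $f^{-1}((f(\kappa), \beta])$ are non-critical by hypothesis, so a second application of Part A gives $K_\beta \co K_\gamma \cup \kappa$. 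Combining, $K_\beta \simeq K_\alpha \cup e^k$.

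The main technical obstacle is the free-face verification in Part A. The subtlety is that $K_\beta$ is a subcomplex, not merely the set of simplices with value $\leq \beta$; it may contain cofaces $\tau' \succ \sigma$ with $f(\tau') > \beta$, admitted because $\tau'$ is a face of some simplex in $K_\beta$ with smaller $f$-value. To rule these out, one argues along a face-chain descending from $\tau'$ to a simplex with $f$-value $\leq \beta$, applying the Morse inequality at each unmatched step and invoking the acyclicity of $M$ to reach a contradiction with maximality. A cleaner alternative is to first reduce to the case where $f$ takes distinct values on all unmatched simplices via a small perturbation preserving $M$, after which the collapse step follows directly from the maximality criterion.
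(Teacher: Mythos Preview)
The paper does not supply its own proof of this lemma: it is stated with the attribution ``Thm.~3.3 and 3.4, \cite{forman_98}'' and invoked as a known result, so there is no in-paper argument to compare against. Your sketch is essentially Forman's original proof, and the overall strategy for both parts is correct.

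That said, two points in your write-up deserve tightening. First, in Part~A your induction is on the number of matched pairs in $f^{-1}((\alpha,\beta])$, but after the elementary collapse the resulting complex $K_\beta\smallsetminus\{\sigma,\tau\}$ need not be a sublevel complex $K_{\beta'}$; you should state the inductive hypothesis for an arbitrary subcomplex $L$ with $K_\alpha\subseteq L\subseteq K_\beta$ whose simplices outside $K_\alpha$ are all matched within $L$, and check that this property is preserved by the collapse. Second, your ``cleaner alternative'' of perturbing $f$ to have distinct values does not by itself eliminate the subtlety you flag: even for injective $f$, the sublevel complex $K_\beta$ can contain simplices with $f$-value exceeding $\beta$ (namely, the lower partner of a matched pair whose upper partner has value $\le\beta$), so the face-chain argument you outline is still needed. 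Injectivity does, however, make Part~B clean, since then $K_{f(\kappa)}=K_\gamma\cup\{\kappa\}$ on the nose.
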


From a combinatorial perspective, discrete Morse functions can be viewed as a totally ordered list of internal collapses.
\begin{proposition}[Internal collapse {\cite{fernandez_24}}] \label{internal collapse}
Let $K$ be a finite simplicial complex of dimension $N$. If a subcomplex $K_0 \subseteq K$ collapses to a subcomplex $L_0$, then $K \Nplusonedef L$, where $L$ is a CW-complex obtained from $L_0$ by attaching one $k$-cell for each $k$-simplex in $K \smallsetminus K_0$.
\end{proposition}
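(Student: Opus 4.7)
The plan is to induct on the length of the collapsing sequence $K_0 \co L_0$, reducing the statement to the base case of a single elementary collapse $K_0 \ce L_0$, where $L_0 = K_0 \smallsetminus \{\sigma, \tau\}$ and $\sigma \prec \tau$ is a free pair of $K_0$. The inductive step follows by concatenation: sequencing $(N+1)$-deformations preserves the dimension bound, and the cell-count bookkeeping is additive across elementary collapses. Hence the whole proof reduces to realizing one internal elementary collapse as an $(N+1)$-deformation of $K$.

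For the base case, I would construct the deformation explicitly via an expansion-then-collapse strategy. Pick a fresh vertex $w \notin V(K)$ and perform the two elementary expansions $K \ee K \cup \{w, w\sigma\} \ee K \cup \{w, w\sigma, w\tau\}$: in the first, $w$ is a free face of $w\sigma$; in the second, $w\sigma$ is a free face of $w\tau$. Since $\dim(w\tau) = \dim \tau + 1 \leq N+1$, every intermediate complex lies within the permitted dimension. The appended cone $\{w\sigma, w\tau\}$ then plays the role of an auxiliary attaching region onto which one can ``slide'' the simplices of $K \smallsetminus K_0$ that contain $\sigma$ or $\tau$, freeing the pair $\sigma \prec \tau$ for an internal collapse. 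After a carefully ordered finite sequence of elementary expansions and collapses -- processing the displaced simplices by increasing dimension -- the resulting complex coincides with $L$: the cells of $L_0$ persist unchanged, and each $k$-simplex $\rho \in K \smallsetminus K_0$ descends to a $k$-cell whose attaching map is the composition of $\partial \rho \hookrightarrow K$ with the quotient identifying $|\sigma| \cup |\tau|$ with its image in $|L_0|$.

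The main obstacle is the combinatorial bookkeeping of this expansion--collapse sequence. Specifically, one must (i) order the reattachment of the simplices $\rho \in K \smallsetminus K_0$ with $\sigma \subseteq \rho$ (or $\tau \subseteq \rho$) by increasing dimension, so that each intermediate move is a genuine elementary collapse or expansion of the ambient complex; (ii) ensure that no step exceeds dimension $N+1$, which ultimately hinges on $\dim \tau \leq N$ and on never coning higher than once; and (iii) verify that simplices $\rho \in K \smallsetminus K_0$ disjoint from $\sigma$ and $\tau$ descend to $L$ without modification, so that the final CW-complex has exactly one $k$-cell for every $k$-simplex of $K \smallsetminus K_0$, as claimed. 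Once this bookkeeping is in place, the rest of the argument is routine verification that the described deformation terminates at $L$ with the prescribed attaching maps.
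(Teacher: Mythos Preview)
The paper does not prove this proposition itself; it is quoted from \cite{fernandez_24} (the CW version is cited there as Proposition~1.3), so there is no in-paper proof to compare against.

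Your overall strategy --- reduce to a single elementary collapse and realise it by an explicit expansion--collapse sequence --- is the right shape, but the concrete expansion you write down fails as soon as $\dim\sigma \geq 1$. The object $K \cup \{w, w\sigma\}$ is then \emph{not} a simplicial complex: the simplex $w\sigma = \{w\}\cup\sigma$ has faces $w\sigma'$ for every proper face $\sigma' \subsetneq \sigma$, and none of these lie in $K \cup \{w, w\sigma\}$. Hence neither of your two ``elementary expansions'' is one. A correct version cones on the full closed simplex $\overline{\tau}$ (so one passes to $K \cup w\!*\!\overline{\tau}$, which does collapse to $K$ through dimensions $\leq N+1$), or else works directly in the CW category via standard cell-trading lemmas. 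Either route makes the bookkeeping in your items (i)--(iii) substantially heavier than your sketch acknowledges; in particular, freeing the pair $\sigma \prec \tau$ after the expansion still requires sliding every simplex of $K\smallsetminus K_0$ containing $\sigma$ off of $\sigma$, and you have not said how.

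There is also a smaller gap in the induction. After one internal elementary collapse the ambient complex is a CW-complex, not a simplicial one, so ``concatenation'' needs the CW formulation of the statement (which the paper notes exists, as \cite[Prop.~1.3]{fernandez_24}). You should either prove the CW version directly and induct there, or argue that the relevant subcomplex $K_0^{(i)}$ remains simplicial inside each intermediate $L^{(i)}$.
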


Proposition~\ref{internal collapse} also admits a formulation in terms of CW-complexes; see \cite[Proposition~1.3]{fernandez_24}.
We refer to the deformation from $K$ to $L$ as an \emph{internal collapse}. The collapse takes place within a subcomplex $K_0 \subseteq K$, while the resulting deformation affects the entire complex through the reattachment of cells corresponding to the simplices in $K \smallsetminus K_0$.

Internal collapses arise naturally in discrete Morse theory: for any discrete Morse function $f \colon K \to \mathbb{R}$, the transitions between sublevel complexes as described in the Morse Lemma (Lemma~\ref{lemma morse}) correspond to internal collapses. Specifically, if the interval $(\alpha, \beta]$ contains no critical simplices, then $K_\beta \simeq K_\alpha$, and there is an internal collapse from $K$ to a CW-complex obtained from $K_\alpha$ by attaching a $k$-cell for each $k$-simplex in $K \smallsetminus K_\beta$. In general, the entire deformation from $K$ to $\core_f(K)$ can be interpreted as a well-ordered sequence of internal collapses.

We summarize the viewpoint of discrete Morse theory as a generalization of classical collapses in the following statement.

\begin{proposition}\label{internal core vs weak core}
Let $K$ be a finite simplicial complex.
\begin{enumerate}
    \item If $K \searrow L$, then there exists a discrete Morse function $f \colon K \to \mathbb{R}$ such that $\core_f(K) = L$.
    \item If $f \colon K \to \mathbb{R}$ is a discrete Morse function, then there exists a sequence of internal collapses from $K$ to $\core_f(K)$. 
\end{enumerate}
\end{proposition}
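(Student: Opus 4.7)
My plan is to prove the two directions separately, both leveraging Chari's correspondence between discrete Morse functions and acyclic matchings, together with the local structure of the sublevel filtration provided by Morse Lemma~\ref{lemma morse}.

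For part~(1), I take the given collapse $K = K_n \searrow K_{n-1} \searrow \dots \searrow K_0 = L$ in which the step $K_i \searrow K_{i-1}$ removes the pair $(\sigma_i, \tau_i)$ with $\sigma_i \prec \tau_i$, and form the pairing $M = \{\{\sigma_i, \tau_i\}\colon 1 \leq i \leq n\}$ on the face poset $\X(K)$. The simplices left unmatched by $M$ are exactly those of $L$. The only substantive step is verifying that $M$ is acyclic: given an alternating cycle of distinct pairs $(\sigma_{i_1}, \tau_{i_1}), \ldots, (\sigma_{i_k}, \tau_{i_k})$ with $\sigma_{i_{j+1}} \prec \tau_{i_j}$ (indices mod $k$), the fact that $\sigma_{i_j}$ is a \emph{free} face of $\tau_{i_j}$ in $K_{i_j}$ forces the competing coface $\tau_{i_{j-1}}$ of $\sigma_{i_j}$ to have been removed strictly earlier in the collapse, i.e.\ $i_{j-1} > i_j$ for every $j$, leading to the contradiction $i_1 > i_1$. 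Chari's theorem then produces a discrete Morse function $f$ realizing $M$, and since the pairs of $M$ collapse $K$ to $L$ in the Whitehead sense (no re-attaching is needed), Forman's construction of $\core_f(K)$ from Theorem~\ref{discrete Morse theorem} coincides with $L$ as a CW-complex.

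For part~(2), I rebuild the deformation from $K$ to $\core_f(K)$ by peeling off the sublevel filtration. Let $c_1 < \dots < c_m$ be the critical values of $f$, and choose intermediate values $a_0 < c_1 < a_1 < \dots < c_m < a_m$ with $K_{a_0} = \varnothing$ and $K_{a_m} = K$. Morse Lemma~\ref{lemma morse}(A) furnishes Whitehead collapses $K_{a_i} \searrow K_{c_i}$ for each $i$, while $K_{c_i}$ differs from $K_{a_{i-1}}$ by the single critical simplex $\sigma_i$ after the paired simplices with $f$-values in $(a_{i-1}, c_i)$ collapse against one another. I then iterate Proposition~\ref{internal collapse} (in its CW formulation): the first step $K = K_{a_m} \searrow K_{c_m}$ is itself a Whitehead collapse; the next collapse $K_{a_{m-1}} \searrow K_{c_{m-1}}$ occurs inside a proper subcomplex of the current CW-complex, so Proposition~\ref{internal collapse} turns it into an internal collapse that re-attaches the previously accumulated critical cell $\sigma_m$. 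After $m$ iterations the filtration is exhausted and we arrive at the CW-complex with one cell per critical simplex, which is $\core_f(K)$.

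The main obstacle lies in part~(2), in ensuring that the CW-complex produced by the iterated internal collapses is genuinely the $\core_f(K)$ of Theorem~\ref{discrete Morse theorem} and not merely a homotopy equivalent complex with the correct cell count. This requires the CW-complex version of Proposition~\ref{internal collapse} and a careful tracking, across the iteration, of the attaching maps of the re-attached critical cells so that they match those dictated by Forman's construction. In part~(1), by contrast, the acyclicity argument is conceptually clean once the indexing convention is fixed.
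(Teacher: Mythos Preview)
The paper states this proposition without a formal proof, treating it as a summary of the preceding discussion (Lemma~\ref{lemma morse}, Proposition~\ref{internal collapse}, and the paragraph linking them). Your proposal fleshes out exactly that implicit argument: part~(1) via Chari's correspondence applied to the matching coming from the collapse sequence, and part~(2) by iterating Proposition~\ref{internal collapse} along the sublevel filtration. This is the intended route.

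Two minor points. In part~(2) you write ``Let $c_1 < \dots < c_m$ be the critical values of $f$'' and then associate a \emph{single} critical simplex $\sigma_i$ to each $c_i$; in general several critical simplices may share a value, so either invoke the standard perturbation to an injective Morse function (which preserves the matching and the critical set) or refine your interval subdivision so that each piece of Lemma~\ref{lemma morse}(B) sees one critical simplex at a time. Regarding your stated obstacle---whether the output of the iterated internal collapses is literally $\core_f(K)$---note that in this paper $\core_f(K)$ is \emph{defined} via Theorem~\ref{discrete Morse theorem}, whose construction in \cite{fernandez_24} is precisely this iterated procedure; so there is nothing further to check once you appeal to the CW version of Proposition~\ref{internal collapse}.
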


\section{Strong discrete Morse theory}\label{sec:strong morse}

Building on the strong homotopy theory for simplicial complexes introduced by Barmak and Minian \cite{barmak_minian_12}, we present a  version of discrete Morse theory centred on the concept of \textit{internal strong collapses}. We recall first the notion of strong collapses.

Given a simplicial complex $K$ and a vertex $v \in V(K)$, the \emph{open star} of $v$, denoted $\st(v, K)$, is the collection of simplices in $K$ that contain $v$. The subcomplex of simplices disjoint from $v$ is denoted $K \smallsetminus v$. The \emph{link} of $v$ in $K$, denoted $\lk(v, K)$, is the subcomplex of $K \smallsetminus v$ consisting of simplices $\sigma$ such that $\sigma \cup \{v\}$ is a simplex of $K$.
A \emph{simplicial cone} over $K$ with apex $a$ (a vertex not in $K$) is the complex $a*K$ consisting of all simplices of $K$, the vertex $\{a\}$, and all simplices of the form $\sigma \cup \{a\}$ with $\sigma \in K$. 
The \emph{closed star} of $v$, denoted $\overline{\st}(v, K)$, is the subcomplex $v \ast \lk(v,K)$.

\begin{definition}[Def.~2.1, \cite{barmak_minian_12}]
Let $K$ be a simplicial complex. A vertex $v$ of $K$ is \emph{dominated} by a vertex $a$ if $\lk(v,K)$ is a simplicial cone $a*K_0$, where $K_0$ is a subcomplex of $K$. In that case, there is an \emph{elementary strong collapse} from $K$ to $K \smallsetminus v$, denoted $K \searrow K \smallsetminus v$. A \emph{strong collapse} $K \sco L$ from a simplicial complex $K$ to a subcomplex $L$ is a sequence of elementary strong collapses starting at $K$ and ending at $L$. In that case, we say that $L$ is a \emph{strong core} of $K$. If $L$ has no dominated vertices, we say that $L$ is a \emph{minimal strong core} of $K$.\footnote{This is simply called a \emph{core} in \cite{barmak_minian_12}. We will use the term \emph{minimal strong core} in this manuscript to avoid confusion with other core constructions.}
\end{definition}

If $v \in K$ is dominated by $a$, then there is an explicit strong deformation retraction $|r_a|\colon |K| \to |K \smallsetminus v|$ 
induced by the simplicial map
\begin{equation}\label{eq:retraction collapse}
r_a(\sigma) =
\begin{cases}
\sigma & \text{if } v \notin \sigma, \\
\{a\} \cup \sigma \smallsetminus \{v\} & \text{if } v \in \sigma.
\end{cases}
\end{equation}
More generally, if $K \sco L$ via a sequence of elementary strong collapses
\[
K = K_0 \esc K_1 \esc \dots \esc K_n = L,
\]
with $K_{i+1} = K_i \smallsetminus v_i$ and $v_i$ dominated by $a_i \in K_i$, then there is a strong deformation retraction $|r|\colon |K| \to |L|$ induced  by the composition
\begin{equation} \label{eq:composition retraction collapse}
r := r_{a_n} \circ r_{a_{n-1}} \circ \dots \circ r_{a_1}.
\end{equation}
Moreover, if $K \sco L$, then $K \co L$ \cite[Rmk.~2.4]{barmak_minian_12}. That is, Whitehead's notion of collapse is weaker than the notion of strong collapse.

\begin{theorem} [Thm. 2.11. \cite{barmak_minian_12}] Any sequence of strong collapses from a simplicial complex $K$ to a subcomplex without dominated vertices yields  a unique (up to isomorphism) subcomplex of $K$. That is, the minimal strong core of a simplicial complex is unique up to isomorphism.
\end{theorem}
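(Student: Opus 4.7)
The plan is to adapt Stong's classical proof of the uniqueness of the minimal core of a finite poset to the simplicial setting, substituting \emph{contiguity} of simplicial maps for homotopy of poset maps. Recall that simplicial maps $f, g \colon K \to L$ are contiguous if $f(\sigma) \cup g(\sigma) \in L$ for every $\sigma \in K$; call the equivalence relation generated by contiguity \emph{strong equivalence}. This relation is preserved under pre- and post-composition with any simplicial map, since contiguity clearly is.

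First, I would verify that any strong collapse $K \sco L$ furnishes a simplicial retraction $r \colon K \to L$, via the composition in equation~\eqref{eq:composition retraction collapse}, such that $i \circ r$ is strongly equivalent to $\mathrm{id}_K$, where $i \colon L \hookrightarrow K$ is the inclusion. For an elementary collapse removing $v$ dominated by $a$, the map $i \circ r_a$ fixes every $\sigma$ with $v \notin \sigma$, while for $\sigma = \tau \cup \{v\}$ one has $\sigma \cup (i \circ r_a)(\sigma) = \tau \cup \{v,a\} \in K$, because $\tau \in \lk(v,K) = a * K_0$ implies $\tau \cup \{a\} \in \lk(v,K)$. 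Concatenating contiguities along the steps of the strong collapse yields the desired strong equivalence $i \circ r \sim \mathrm{id}_K$.

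The heart of the argument is the following rigidity lemma: if $K$ has no dominated vertices and $f \colon K \to K$ is a simplicial map contiguous to $\mathrm{id}_K$, then $f = \mathrm{id}_K$. Indeed, setting $f(v) = w$ and applying contiguity to $\sigma = \tau \cup \{v\}$ for $\tau \in \lk(v,K)$ forces $\tau \cup \{v, w\} \in K$, hence $\lk(v,K) = w * K_0$ with $K_0 = \{\tau \in \lk(v,K) \colon w \notin \tau\}$, so $v$ is dominated by $w$; by hypothesis $w = v$. Iterating along a chain of contiguities lifts this to the full strong equivalence relation, so any simplicial self-map of $K$ strongly equivalent to $\mathrm{id}_K$ must equal $\mathrm{id}_K$.

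The uniqueness conclusion is then a formal diagram chase. Let $L_1, L_2$ be two minimal strong cores of $K$, with retractions $r_j \colon K \to L_j$ and inclusions $i_j$ as in the first step. Set $f = r_2 \circ i_1 \colon L_1 \to L_2$ and $g = r_1 \circ i_2 \colon L_2 \to L_1$. Then $g \circ f = r_1 \circ (i_2 \circ r_2) \circ i_1$ is strongly equivalent to $r_1 \circ i_1 = \mathrm{id}_{L_1}$, so the rigidity lemma forces $g \circ f = \mathrm{id}_{L_1}$; symmetrically $f \circ g = \mathrm{id}_{L_2}$, and $f$ is a simplicial isomorphism. The main obstacle is the rigidity lemma itself, which converts all of the combinatorial content of the hypothesis ``no dominated vertices'' into a rigidity statement for contiguous self-maps; the rest of the argument is purely formal once this local dictionary between contiguity and domination is in place.
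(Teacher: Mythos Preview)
The paper does not actually prove this theorem: it is stated with a citation to Barmak and Minian \cite{barmak_minian_12} and no proof is given in the present manuscript. Your argument is correct and is essentially the original proof from that reference; the key rigidity lemma (a minimal complex admits no non-identity self-map contiguous to the identity) together with the observation that each elementary strong collapse yields a retraction contiguous to the identity is exactly Barmak--Minian's strategy, so there is nothing to contrast.
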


Recall that a CW-complex $K$ is \emph{regular} if the attaching map $\varphi \colon S^{n-1} \to K^{(n-1)}$ of each open $n$-cell $e^n$ is a homeomorphism onto its image $\partial e^n$ in the $(n-1)$-skeleton. A regular CW-complex is said to be \emph{combinatorial} if, for each cell $e^n$, the attaching map --- regarded as a cellular map from $S^{n-1}$ endowed with a CW-structure --- is a \emph{combinatorial map}, meaning that it maps each open $k$-cell of $S^{n-1}$ homeomorphically onto an open $k$-cell of $K$ (see \cite[Ch.~II]{hog-angeloni_metzler_93}).
The geometric realization $|K|$ of any simplicial complex $K$ is canonically a regular combinatorial CW-complex.
Given a vertex $v$ in a regular CW-complex $K$, we define the \emph{open star} $\st(v,K)$ as the union of all open cells in $K$ that contain $v$ as a face. We denote by $\overline{e}$ the closure of a cell $e \in K$, i.e., the smallest subcomplex of $K$ containing $e$, which consists of $e$ together with all its faces.
Similarly, we denote by $K \smallsetminus v$ the subcomplex of $K$ consisting of all cells $e$ such that $v \notin \overline{e}$.

\medskip

The following result formalizes the notion of an \emph{internal strong collapse}, illustrated in Figure~\ref{fig:strong_collapse}.

\begin{theorem} \label{theo1:  strong internal collapse}  
Let $K$ be a regular combinatorial CW-complex, and let $w$ be a vertex of $K$ such that the subcomplex $K \smallsetminus w$ is a simplicial complex. Suppose there exists a subcomplex $L \subseteq K \smallsetminus w$ such that $K \smallsetminus w \sco L$ via a sequence of strong collapses.

Then there exists a regular combinatorial CW-complex
\[
Z = L \cup \bigcup_{\ell} \tilde{e}_\ell
\]
that is homotopy equivalent to $K$, where:
\begin{itemize}
    \item $L$ is embedded as a subcomplex of $Z$,
    \item for each cell $e_\ell \in K$ such that $w \in \overline{e_\ell}$, there is a corresponding $\ell$-cell $\tilde{e}_\ell$ in $Z$ not contained in $L$, and
    \item the attaching map of each $\tilde{e}_\ell$ is induced from that of $e_\ell$ and the strong deformation retraction $|r| \colon |K \smallsetminus w| \to |L|$ determined by the strong collapse.
\end{itemize}

Moreover, if $\dim(K) = N$, then $K \Nplusonedef Z$.
\end{theorem}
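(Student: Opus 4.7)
The plan is to combine Proposition \ref{internal collapse} with an induction on the length of the strong-collapse sequence $K \smallsetminus w = K_0 \esc K_1 \esc \cdots \esc K_n = L$. The $(N+1)$-deformation statement is essentially immediate: since every strong collapse is in particular a sequence of elementary simple collapses, the subcomplex $K \smallsetminus w \subset K$ collapses to $L$ in the sense of Whitehead, and Proposition \ref{internal collapse} yields an $(N+1)$-deformation $K \Nplusonedef Z$ together with a one-to-one, dimension-preserving correspondence between the cells of $Z$ not in $L$ and the cells of $K$ containing $w$. The real content of the theorem is therefore to promote $Z$ to a \emph{regular and combinatorial} CW-complex with attaching maps obtained by composing the original ones with $|r|$.

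For this, I would induct on $n$, reducing to a single elementary strong collapse. Assume $v$ is dominated by $a$ in $K \smallsetminus w$, with associated simplicial retraction $r_a$ from \eqref{eq:retraction collapse}. Define $|K'|$ as the topological pushout $|K| \cup_{|K\smallsetminus w|} |(K\smallsetminus w) \smallsetminus v|$ along $|r_a|$. Since $|K\smallsetminus w| \hookrightarrow |K|$ is a cofibration (CW pair) and $|r_a|$ is a strong deformation retraction, the quotient map $\rho_a \colon |K| \to |K'|$ is a homotopy equivalence. Endow $|K'|$ with the cell structure in which the cells of $(K\smallsetminus w)\smallsetminus v$ are kept unchanged, the vertex $v$ of $K$ is absorbed into $a$, and each cell $e_\ell$ of $K$ with $w \in \overline{e_\ell}$ yields a corresponding cell $\tilde{e}_\ell = \rho_a(e_\ell)$. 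Applying the inductive hypothesis to $K'$---for which $K' \smallsetminus w = (K\smallsetminus w)\smallsetminus v$ is simplicial and strong-collapses to $L$ in $n-1$ steps---then produces the desired $Z$, with attaching maps given by iterated compositions of the $|r_{a_i}|$'s with the original attaching maps of $K$.

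The main obstacle lies in verifying that this CW-structure on $K'$ is regular and combinatorial; that is, that each $\overline{\tilde{e}_\ell} = \rho_a(\overline{e_\ell})$ is a closed $\ell$-ball and that the induced attaching maps can be taken to be combinatorial. The local geometric picture is clear: $|r_a|$ is the identity outside the closed star $|\overline{\st}(v,K\smallsetminus w)| = |v \ast a \ast K_0|$ and on that star it strongly deformation retracts $v \ast a \ast K_0$ onto $a \ast K_0$ by sliding $v$ to $a$. For a regular cell $\overline{e_\ell}$ whose boundary meets this star, the quotient replaces the subdisk $\overline{e_\ell} \cap |v \ast a \ast K_0| \subset \partial e_\ell$ by its retract onto $\overline{e_\ell} \cap |a \ast K_0|$, and a standard fact---that collapsing a disk-like subcomplex of $\partial D^\ell$ yields again a closed $\ell$-ball---gives that $\overline{\tilde{e}_\ell}$ is an $\ell$-ball. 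The subtlety is that $|r_a|$ strictly decreases dimension on simplices containing both $v$ and $a$, so the composition $\rho_a \circ \varphi_\ell$ is not literally an embedding of $S^{\ell-1}$; combinatoriality must be restored by coarsening the pullback CW-structure on $S^{\ell-1}$ so that every open cell maps homeomorphically onto an open cell of $\partial \tilde{e}_\ell$. This careful bookkeeping of cells across the strong deformation retract is the technical heart of the proof.
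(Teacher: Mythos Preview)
Your approach matches the paper's: pushout along the simplicial retraction $|r_a|$, induction on the length of the strong-collapse sequence, and the case analysis on whether $v$ and $a$ lie in $\overline{e_\ell}$, with the hard case being when both do. The paper resolves your ``standard fact'' precisely via Rourke--Sanderson regular-neighbourhood theory (\cite[Ch.~3]{rourke_sanderson_72}), building an explicit homeomorphism $\bar f\colon \partial e/\!\sim\ \to \partial e$ showing the quotient boundary sphere is again a sphere---this is exactly the ``careful bookkeeping'' you anticipate.
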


\begin{proof}
Consider the pushout diagram
\[
\xymatrix{
|K \smallsetminus w| \ar[r]^{|r|} \ar@{^{(}->}[d]_{i} & |L| \ar[d]^{\bar{i}}\\
K \ar[r]^{\bar{r}} & Z
}
\]
where $i$ is the inclusion map and $|r| \colon |K \smallsetminus w| \to |L|$ is the strong deformation retraction induced by the strong collapse. Since $|r|$ is a homotopy equivalence and $i$ is a closed cofibration, the Gluing Theorem (see, e.g., \cite[7.5.7, Corollary 2]{brown_68}) implies that $\bar{r}$ is also a homotopy equivalence.

We now define a regular combinatorial CW-structure on $Z$. Assume first that the collapse $K \smallsetminus w \sco L$ is elementary, i.e., $L = (K \smallsetminus w) \smallsetminus v$ for some vertex $v$ dominated by $a \in K \smallsetminus w$.

The $0$-skeleton of $Z$ is defined as $Z^{(0)} = L^{(0)} \cup \{w\}$. We define a map $q \colon K^{(0)} \to Z^{(0)}$ by
\[
q(x) = 
\begin{cases}
x & \text{if } x \in L^{(0)} \cup \{w\},\\
a & \text{if } x = v,
\end{cases}
\]
which agrees with the retraction $|r|$ on $K^{(0)} \smallsetminus \{w\}$.

Inductively, suppose that a regular combinatorial CW-structure has been defined on $Z^{(\ell-1)}$, and that the map $q \colon K^{(\ell -1)} \to Z^{(\ell-1)}$ is combinatorial and coincides with the simplicial retraction $|r|$ on $(K \smallsetminus w)^{(\ell-1)}$. Let $e$ be an $\ell$-cell in $K$ with attaching map $\varphi\colon S^{\ell-1} \to K^{(\ell-1)}$.

If $w \notin \overline{e}$, then $e$ belongs entirely to the subcomplex $K \smallsetminus w$, and its image in $Z$ is defined via $q = |r|$. Since $r \colon K \smallsetminus w \to L$ is simplicial, and the attaching maps in $K\smallsetminus w$ are simplicial, it follows that $q(e)$ inherits a regular combinatorial CW-structure. In particular, if $e \in L$, then $q$ acts as the identity on $e$.

If $w \in \overline{e}$, we distinguish three cases:

\begin{enumerate}
\item If $v \notin \overline{e}$, then $q$ acts as the identity on $e$, and the attaching map of $e$ remains unchanged in $Z$,  inheriting the regular combinatorial CW-structure from $K$.

\item If $v \in \overline{e}$ but $a \notin \overline{e}$, then $r$ sends $v$ to $a$ in $\partial e \cap (K\smallsetminus w)$, resulting in a relabelling, inducing a bijective identification on the boundary. The  attaching map $\tilde{\varphi} \colon S^{\ell - 1} \to Z^{(\ell - 1)}$ of $\tilde e\in Z$ is given by $\tilde{\varphi} := r \circ \varphi$, and the image cell $q(e)$ is obtained from $e$ via this relabelling.

\item If both $v$ and $a$ belong to $\overline{e}$, then the retraction $r$ modifies the attaching map non-trivially. Since $K$ is regular and combinatorial, the boundary $\partial e$ is a PL $(\ell -1)$-sphere. Let 
\[
A := |\overline{\st}(v, K \smallsetminus w)| \cap \partial e,\quad B := |\lk(v, K \smallsetminus w)| \cap \partial e.
\]
Then $A$ is a PL $(\ell -1)$-ball and $B$ a PL $(\ell-2)$-ball, and the strong collapse induces a retraction $r|_A \colon A \to B$.
By \cite[Ch.~3]{rourke_sanderson_72}, there exist regular neighbourhoods $U \supseteq A$ and $V \supseteq B$ in $\partial e$ and a map $f \colon \partial e \to \partial e$ such that  $ f|_A = r|_A $, and $ f|_{\partial e \setminus A} \colon \partial e \setminus A \to \partial e \setminus B $ is a homeomorphism.
This induces a homeomorphism
\[
\bar{f} \colon \partial e / \!\sim\  \longrightarrow \partial e, \quad x \sim r(x)\ \text{for } x \in A.
\]
If $ \varphi \colon S^{\ell -1} \to \partial e $ is the original attaching map of $e$, then the modified attaching map is given by:
\[
\xymatrix@C=4em{
S^{\ell -1} \ar[r]^{\varphi} \ar@{->>}[d] & \partial e \ar[d]^q \\
S^{\ell -1}/\! \approx \ar[r]^{\tilde{\varphi}} & q(\partial e) \subseteq Z^{(\ell -1)}
}
\]
where $ x \approx y $ if $ \varphi(x) \sim \varphi(y) $ under the identification induced by $r$. Since $ \bar{f} $ is a homeomorphism, the quotient space $ S^{\ell -1}/\!\approx $ is homeomorphic to $ S^{\ell -1} $, and the new attaching map $ \tilde{\varphi} $ is a homeomorphism onto its image.
Therefore, the cell $ \tilde{e}$ is attached regularly, and inherits a combinatorial CW-structure. Define $q(e)= \tilde e$. \end{enumerate}

The general case follows by induction on the number of elementary strong collapses in the sequence 
\[
K \smallsetminus w = K_0 \sco K_1 \sco \cdots \sco K_n = L.
\]
At each step, the same argument applies to the retraction $ K_i \to K_{i+1} $, ensuring that the regularity and combinatorial structure are preserved. Since $w$ is not removed in the process, the identification maps never collapse an entire boundary sphere to a point.

Thus, $Z$ is a regular combinatorial CW-complex homotopy equivalent to $K$. The fact that this is an $(N+1)$-deformation follows from \cite[Prop.~1.3]{fernandez_24}.
\end{proof}

The previous result generalizes to the case where $K_0 \sco L_0$ and $K_0$ is obtained from a simplicial complex $K$ by successively removing vertices (together with their open stars). In this case, $K_0$ is a \emph{full subcomplex} of $K$, meaning that every simplex of $K$ whose vertices all belong to $V(K_0)$ is itself a simplex in $K_0$.

\begin{theorem}[Strong internal collapse] \label{theo2: strong internal collapse}
Let $K$ be a simplicial complex of dimension $N$, and let $K_0 \subseteq K$ be a full subcomplex. Suppose $K_0$ strong collapses to a subcomplex $L_0$. Then $K$ is $(N+1)$-homotopy equivalent to a combinatorial regular CW-complex $L$ obtained from $L_0$ by attaching one $k$-cell for each $k$-simplex in $K \smallsetminus K_0$.
\end{theorem}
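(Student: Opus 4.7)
The plan is to adapt the proof of Theorem~\ref{theo1:  strong internal collapse} to the multi-vertex setting, with the full subcomplex $K_0$ playing the role of $K \smallsetminus w$ and the simplices of $K \smallsetminus K_0$ playing the role of the star $\st(w, K)$. First, I would form the pushout
\[
L := |K| \cup_{|K_0|} |L_0|,
\]
where the identification is along the strong deformation retraction $|r|\colon |K_0| \to |L_0|$ associated to $K_0 \sco L_0$ as in~\eqref{eq:composition retraction collapse}. Since $|K_0| \hookrightarrow |K|$ is a closed cofibration and $|r|$ is a homotopy equivalence, the Gluing Theorem yields $|K| \simeq L$. The $(N+1)$-deformation assertion $K \Nplusonedef L$ then follows immediately from Proposition~\ref{internal collapse}, because every strong collapse is in particular a Whitehead collapse; so it suffices to equip $L$ with a regular combinatorial CW-structure whose cells are the simplices of $L_0$ together with one $k$-cell $\tilde e_\sigma$ for each $k$-simplex $\sigma \in K \smallsetminus K_0$.

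For the CW-structure, I would proceed by induction on the length $n$ of the sequence $K_0 = K_0^{(0)} \esc K_0^{(1)} \esc \cdots \esc K_0^{(n)} = L_0$. The base case $n = 0$ is trivial since $L = K$ is already simplicial. In the inductive step, I would analyse the effect on the attaching map of each $\tilde e_\sigma$ of the next elementary strong collapse $K_0^{(i)} \esc K_0^{(i+1)} = K_0^{(i)} \smallsetminus v_i$, with $v_i$ dominated by $a_i \in K_0^{(i)}$. The crucial input is fullness of $K_0$: for every $\sigma \in K \smallsetminus K_0$ one has $|\overline \sigma| \cap |K_0| = |\overline{\sigma \cap V(K_0)}|$, a simplicial sub-face of $\sigma$. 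Hence $r_{a_i}$ acts on this sub-face via the simplicial formula~\eqref{eq:retraction collapse}. When only $v_i$ (and not $a_i$) lies in $\sigma$, the retraction is a combinatorial relabelling that preserves regularity. When both $v_i$ and $a_i$ belong to $\sigma$, I would invoke the regular-neighbourhood surgery of~\cite[Ch.~3]{rourke_sanderson_72} exactly as in Case~(3) of the proof of Theorem~\ref{theo1:  strong internal collapse}: the PL ball $A := |\overline{\st}(v_i, K_0^{(i)})| \cap \partial |\overline \sigma|$ retracts onto the PL ball $B := |\lk(v_i, K_0^{(i)})| \cap \partial |\overline \sigma|$, and extending this retraction to $\partial |\overline \sigma|$ via a map that is a homeomorphism on the complement of $A$ yields a modified attaching map which is a homeomorphism onto its image.

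The main obstacle is tracking regularity through the entire sequence of elementary collapses. In Theorem~\ref{theo1:  strong internal collapse} only a single external vertex $w$ is present, so the cells to be modified form the very restricted set $\st(w, K)$; here, by contrast, the simplices of $K \smallsetminus K_0$ may have arbitrary overlap with $V(K_0)$, and after one elementary collapse their attaching maps are no longer simplicial. The fullness hypothesis is precisely what guarantees that each retraction $r_{a_i}$ still acts on a simplicial sub-ball of $\partial |\overline \sigma|$, so that the PL surgery argument applies uniformly at every elementary step and regularity propagates through the induction, producing in the limit the desired combinatorial regular structure on $L$.
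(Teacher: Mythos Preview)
Your proposal is correct, but it takes a genuinely different inductive route from the paper. The paper's proof is a single sentence: it inducts on the number of vertices in $V(K)\smallsetminus V(K_0)$, applying Theorem~\ref{theo1:  strong internal collapse} at each step. Concretely, one orders the external vertices $w_1,\dots,w_m$, lets $K_j$ be the full subcomplex on $V(K_0)\cup\{w_1,\dots,w_j\}$, and uses that $K_j\smallsetminus w_j=K_{j-1}$ is simplicial with the strong collapse $K_0\sco L_0$ sitting inside it; the regular combinatorial structure is then built up one external vertex at a time. Your approach instead inducts on the length of the strong collapse $K_0\sco L_0$ and reproduces the PL regular-neighbourhood surgery from the proof of Theorem~\ref{theo1:  strong internal collapse} for all cells of $K\smallsetminus K_0$ simultaneously at each elementary step.

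Both inductions are valid. The paper's route is more economical, treating Theorem~\ref{theo1:  strong internal collapse} essentially as a black box and avoiding any reprise of the PL argument; in particular, you never need to re-examine the surgery on $A$ and $B$ or worry about how non-simplicial intermediate boundaries interact with later collapses, since at each inductive step you are back in the single-external-vertex setting with a simplicial complement. Your route, by contrast, is more self-contained and makes the role of fullness explicit (your observation that $|\overline\sigma|\cap|K_0|=|\overline{\sigma\cap V(K_0)}|$ is exactly what is needed), at the cost of re-running Case~(3) of the earlier proof. Either way, the $(N{+}1)$-deformation claim comes for free from Proposition~\ref{internal collapse}, as you note.
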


\begin{proof}
Follows by induction on the vertices in $V(K)\smallsetminus V(K_0)$, applying Theorem \ref{theo1: strong internal collapse} at each step.
\end{proof}

Under the assumptions of Theorem~\ref{theo2: strong internal collapse}, we say that there is a \emph{strong internal collapse} from $K$ to $L$.

\medskip

Let $K$ be a finite simplicial complex, and let $g \colon V(K) \to \mathbb{R}$ be a real-valued function on its vertex set. The function $f$ induces a filtration of $K$ by sublevel subcomplexes $\{K_\alpha\}_ {\alpha\in \RR}$, where each $K_\alpha$ is the subcomplex spanned by the vertices in the sublevel set $g^{-1}\big((-\infty, \alpha]\big)$.

\begin{definition}
Let $g \colon V(K) \to \mathbb{R}$ be a real-valued function, and let $v \in V(K)$ be a vertex.
The \textit{descending open star} of $v$, denoted $\st^\downarrow(v, K)$, is the open star of $v$ in the subcomplex $K_{g(v)}$. Similarly, the \textit{descending link} of $v$, denoted $\lk^\downarrow(v, K)$, is the link of $v$ in $K_{g(v)}$.
We say that $v$ is \textit{descending dominated} if it is dominated in $K_{g(v)}$ by a vertex $a$ with $g(a) < g(v)$. Otherwise, $v$ is called a \textit{strong critical vertex}.
\end{definition}

We have an analogous version of the classical discrete Morse Lemma \ref{lemma morse} in this setting.

\begin{lemma}\label{lemma strong morse}
Let $K$ be a simplicial complex and $g \colon V(K) \to \mathbb{R}$ a real-valued function.
\begin{enumerate}[A.]
    \item If $g^{-1}\big((\alpha, \beta]\big)$ contains no strong critical vertices, then $K_\beta \sco K_\alpha$.
    
    \item If $g^{-1}\big((\alpha, \beta]\big)$ contains a single strong critical vertex $w$, then $K_\beta$ is homotopy equivalent to a CW-complex of the form
    \[
    K_\alpha \cup \bigcup_i e_i,
    \]
    where the cells $e_i$ are in one-to-one correspondence with the simplices in $\st^{\downarrow}(w, K)$. This correspondence preserves dimensions, and the attaching maps of the cells are determined as in Theorem \ref{theo1: strong internal collapse}, resulting in a regular CW-complex structure.
\end{enumerate}
\end{lemma}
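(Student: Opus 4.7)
The plan is to handle the two parts in sequence: Part~A via a top-down induction on the vertices in $V(K_\beta)\smallsetminus V(K_\alpha)$, and Part~B by combining Part~A with Theorem~\ref{theo1: strong internal collapse}.

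For Part~A, enumerate $V(K_\beta)\smallsetminus V(K_\alpha)=\{v_1,\dots,v_n\}$ with $g(v_1)\le\cdots\le g(v_n)$ (ties broken arbitrarily) and remove them in decreasing order. The claim is that at step $i$, after elementary strong collapses of $v_n,\dots,v_{i+1}$, the vertex $v_i$ is still dominated in the current complex $K^{(i)}$ by its original descending dominator $a_i$ from $K_{g(v_i)}$. Since $K^{(i)}$ is the full subcomplex of $K_\beta$ spanned by $V(K_\alpha)\cup\{v_1,\dots,v_i\}$, a direct computation yields
\[
\lk(v_i,K^{(i)})=\lk\bigl(v_i,K_{g(v_i)}\bigr)\smallsetminus\{v_j:j>i,\ g(v_j)=g(v_i)\}.
\]
By hypothesis $\lk(v_i,K_{g(v_i)})=a_i\ast K_0$, and the strict inequality $g(a_i)<g(v_i)$ ensures that $a_i$ is not among the removed vertices; the remaining link is therefore $a_i\ast(K_0\smallsetminus\{v_j:j>i,\ g(v_j)=g(v_i)\})$, still a cone on $a_i$. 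Iterating these elementary strong collapses gives $K_\beta\sco K_\alpha$.

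For Part~B, first apply Part~A to the interval $(g(w),\beta]$, which contains no strong critical vertices, to obtain $K_\beta\sco K_{g(w)}$. Next, consider the simplicial complex $K_{g(w)}\smallsetminus w$. Its vertex set $V(K_\alpha)\cup\{v_i:g(v_i)\le g(w)\}$ consists only of descending dominated vertices, and deleting $w$ does not spoil any such domination: for every such $v_i$, the dominator $a_i$ satisfies $g(a_i)<g(v_i)\le g(w)$, so $a_i\ne w$, and if $w$ appears in $K_0$ then $a_i\ast K_0\smallsetminus w=a_i\ast(K_0\smallsetminus w)$ remains a cone on $a_i$. Running the same induction as in Part~A therefore yields $K_{g(w)}\smallsetminus w\sco K_\alpha$. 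Applying Theorem~\ref{theo1: strong internal collapse} to the triple $(K_{g(w)},w,K_\alpha)$ produces a regular combinatorial CW-complex $Z=K_\alpha\cup\bigcup_\ell\tilde e_\ell$ homotopy equivalent to $K_{g(w)}$, whose cells are in dimension-preserving bijection with the simplices of $K_{g(w)}$ containing $w$, i.e.\ with $\st^{\downarrow}(w,K)$, and whose attaching maps arise as described in that theorem. Composing with the homotopy equivalence $K_\beta\simeq K_{g(w)}$ from the first step completes the argument.

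The main obstacle will be the careful bookkeeping of ties among $g$-values in Part~A: when several vertices $v_{i+1},\dots,v_{i+k}$ share the level $g(v_i)$ and are scheduled for removal before $v_i$, one must verify that none of them can coincide with the dominator $a_i$ and hence that the cone structure of $\lk(v_i,K_{g(v_i)})$ is preserved under their removal. This is precisely where the \emph{strict} inequality $g(a_i)<g(v_i)$ in the definition of ``descending dominated'' is essential, since it forces the apex of the dominating cone to sit at a strictly lower level than any same-level vertex removed before $v_i$. The analogous issue in Part~B, that removing $w$ from links cannot destroy dominations of vertices at level $\le g(w)$, is resolved by the same strict-inequality argument.
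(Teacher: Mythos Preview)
Your proof is correct and follows essentially the same approach as the paper. For Part~A, both you and the paper order the vertices by $g$-value, remove them top-down, and use the identity $\lk(v_i,K^{(i)})=\lk(v_i,K_{g(v_i)})\smallsetminus\{v_j:j>i,\ g(v_j)=g(v_i)\}$ together with the strict inequality $g(a_i)<g(v_i)$ to conclude that the link remains a cone on $a_i$; for Part~B, the paper simply states that it ``follows from Part~A and Theorem~\ref{theo1:  strong internal collapse}'', whereas you spell out the two-step reduction $K_\beta\sco K_{g(w)}$ and $K_{g(w)}\smallsetminus w\sco K_\alpha$ and correctly check that removing $w$ cannot destroy any descending domination (since $a_i\ne w$).
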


\begin{proof}
The function $g$ induces a total order $\{v_1, v_2, \dots, v_n\}$ on $V(K)$ such that $g(v_i) \leq g(v_j)$ whenever $i < j$. For each $i$, let $K_i$ be the subcomplex of $K$ generated by all simplices whose vertices lie in $\{v_1, v_2, \dots, v_i\}$. 

For part~A., assume that $g^{-1}\big((\alpha, \beta]\big)$ contains no strong critical vertices. Then for each $v_i \in g^{-1}\big((\alpha, \beta]\big)$, the descending link $\lk^{\downarrow}(v_i, K) = \lk(v_i, K_{g(v_i)})$ is a simplicial cone with apex $a_i$, where $g(a_i) < g(v_i)$. Since $K_i \subseteq K_{g(v_i)}$, we have
\[
\lk(v_i, K_i) = \lk(v_i, K_{g(v_i)}) \smallsetminus \{\sigma \in \lk(v_i, K_{g(v_i)}) \colon \sigma \ni v_j \text{ with } j > i \text{ and } g(v_j) = g(v_i)\}.
\]
This subcomplex remains a cone with apex $a_i$. Therefore, $v_i$ is dominated in $K_i$ by $a_i$, and $K_i \esc K_i \smallsetminus v_i = K_{i-1}$. Applying this inductively, we obtain a sequence of strong collapses from $K_\beta$ to $K_\alpha$.

Part~B. follows from part~A. and Theorem~\ref{theo1: strong internal collapse}.
\end{proof}


Building on Theorem \ref{theo2: strong internal collapse} and Lemma \ref{lemma strong morse}, we establish the following result, which formalize the relationship between internal strong collapses, regular CW-complexes, and discrete Morse theory. 

\begin{theorem}[Strong discrete Morse theory]\label{strong Morse theorem}  
Let $K$ be a finite simplicial complex, and let $g \colon V(K) \to \mathbb{R}$ be a real-valued function. Then, $K$ is homotopy equivalent to a \textit{regular} CW-complex $\core_g(K)$, whose  cells are in one-to-one correspondence with the simplices in the descending open star of the strong critical vertices of $g$.  
\end{theorem}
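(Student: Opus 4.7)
The plan is to proceed by induction on the sublevel filtration of $K$ induced by $g$. First, I would order the vertices $v_1, \dots, v_n$ so that $g(v_i) \leq g(v_j)$ whenever $i < j$, and let $K_i$ denote the full subcomplex of $K$ spanned by $\{v_1, \dots, v_i\}$, producing a filtration $\varnothing = K_0 \subseteq K_1 \subseteq \dots \subseteq K_n = K$. Each inclusion $K_{i-1} \subseteq K_i$ adjoins precisely the open star of $v_i$ in $K_i$, which by construction coincides with the descending open star $\st^{\downarrow}(v_i, K)$.

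I would then define regular combinatorial CW-complexes $C_i$ inductively with $C_0 = \varnothing$ and $C_i \simeq K_i$, using the dichotomy of Lemma \ref{lemma strong morse}. If $v_i$ is descending dominated, I set $C_i := C_{i-1}$; the elementary strong collapse $K_i \esc K_{i-1}$ contributes a strong deformation retraction that composes with the accumulated retraction $|K_{i-1}| \to |C_{i-1}|$ via the rule \eqref{eq:composition retraction collapse}. If $v_i$ is strong critical, I apply Theorem \ref{theo1: strong internal collapse} with $w := v_i$ and with $C_{i-1}$ playing the role of the regular subcomplex $L$, obtaining a regular combinatorial CW-complex $C_i \simeq K_i$ built from $C_{i-1}$ by attaching, for each $k$-simplex $\sigma \in \st^{\downarrow}(v_i, K)$, a $k$-cell $\tilde{e}_\sigma$ whose attaching map factors the original attaching map of $\sigma$ through the accumulated retraction onto $C_{i-1}$. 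Setting $\core_g(K) := C_n$ and concatenating the intermediate homotopy equivalences yields $K \simeq \core_g(K)$, with cells in explicit bijection with the simplices in the descending open stars of the strong critical vertices of $g$.

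The main obstacle is the bookkeeping at each strong critical step: verifying that the ambient subcomplex $K_i \smallsetminus v_i$ strong collapses to $C_{i-1}$ in a manner compatible with the hypotheses of Theorem \ref{theo1: strong internal collapse}. The strong collapse portion is supplied by the consecutive descending dominated vertices between strong critical indices (part A of Lemma \ref{lemma strong morse}), while the cells of $C_{i-1}$ outside of the initial subcomplex are the cells produced at earlier strong critical steps, which Theorem \ref{theo1: strong internal collapse} (applied to each previous critical step) ensures are attached regularly. Thus the inductive hypothesis feeds directly into the next application of that theorem, and regularity, combinatoriality, and the homotopy equivalence are all preserved along the entire filtration, yielding $\core_g(K)$ as the desired regular CW-complex.
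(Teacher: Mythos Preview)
Your approach --- induction along the vertex filtration, invoking Lemma~\ref{lemma strong morse} at each step and Theorem~\ref{theo1:  strong internal collapse} at the strong critical vertices --- is exactly the paper's strategy; the paper's own proof is essentially your first paragraph compressed into two lines.

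There is one point that deserves more care (and which the paper's terse proof likewise elides). Once a second strong critical vertex has been processed, $C_{i-1}$ is no longer a subcomplex of $K_{i-1}=K_i\smallsetminus v_i$ reached by strong collapse: it already contains non-simplicial regular cells produced at earlier critical steps, so the hypotheses of Theorem~\ref{theo1:  strong internal collapse} --- which require $K\smallsetminus w$ to be simplicial and $L$ to be obtained from it by a strong collapse --- are not literally met with $L=C_{i-1}$. Your third paragraph correctly flags this as the obstacle, but asserting that ``the inductive hypothesis feeds directly into the next application of that theorem'' does not by itself close the gap. What is actually being used is that the accumulated equivalence $|K_{i-1}|\to C_{i-1}$ is a composite of simplicial retractions and the combinatorial quotient maps $q$ constructed in the proof of Theorem~\ref{theo1:  strong internal collapse}, so the PL regular-neighbourhood argument there still goes through when one forms the next pushout along this composite. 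In effect you are invoking a mild extension of Theorem~\ref{theo1:  strong internal collapse} in which only the portion undergoing the strong collapse need be simplicial; this follows from the inductive structure of that theorem's own proof, but it should be stated rather than left implicit.
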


\begin{proof}
Subdivide the image of $g$ into subintervals, each satisfying the conditions of Lemma \ref{lemma strong morse} A. or B., and iteratively apply the corresponding deformations to $K$. 
The cells in the resulting CW-complex correspond to the simplices in $\{\st^\downarrow(w, K) \colon w \text{ is a critical vertex in } K\}$. 
\end{proof}

We denote by $\core_g(K)$ the regular CW-complex obtained from the critical simplices of $g \colon V(K) \to \mathbb{R}$ in Theorem~\ref{strong Morse theorem}, and refer to it as the \emph{strong internal core} of $K$ associated to $g$. 
Notice that the strong internal core depends on a choice of dominating vertices for the strong internal collapses, but we omit this information unless necessary. 

\begin{proposition}[Connection to strong collapses]\label{prop:strong-vs-stronginternal}
If $K \sco L$, then there exists a function $g \colon V(K) \to \mathbb{R}$ such that $\core_g(K) = L$.
\end{proposition}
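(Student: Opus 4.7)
The plan is to reverse the given strong collapse sequence and encode it as the filtration of a carefully chosen function on the vertices. Fix an elementary decomposition
\[
K = K_0 \esc K_1 \esc \cdots \esc K_n = L,
\]
where $K_{i+1} = K_i \smallsetminus v_i$ and $v_i$ is dominated by some $a_i \in K_i$ with $a_i \neq v_i$. Define $g \colon V(K) \to \mathbb{R}$ by $g(u) = 0$ for every $u \in V(L)$ and $g(v_i) = n - i$ for every removed vertex. By design, the $v_i$ appear along the filtration in reverse order of removal, with all of $V(L)$ sitting at the base level.

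The key identification is that $K_{g(v_i)} = K_i$ for every $i$. Both are full subcomplexes of $K$ on the vertex set $V(L) \cup \{v_j : j \geq i\}$: on the filtration side this is immediate from the definition, and on the strong-collapse side it follows by induction using the fact that removing an open star of a vertex from a full subcomplex yields a full subcomplex. Under this identification, the domination of $v_i$ by $a_i$ in $K_i$ means $\lk(v_i, K_{g(v_i)})$ is a cone on $a_i$; since $a_i$ lies either in $V(L)$ or equals some $v_j$ with $j > i$, in either case $g(a_i) < g(v_i)$, so $v_i$ is descending dominated. Conversely, for any $u \in V(L)$, we have $K_{g(u)} = K_0 = L$, and no vertex of $K$ has $g$-value strictly less than $0$; therefore $u$ cannot be descending dominated and is strong critical. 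Thus the strong critical vertices of $g$ are precisely $V(L)$, and for each such $u$ the descending open star satisfies $\st^\downarrow(u, K) = \st(u, L)$. Every nonempty simplex of $L$ lies in $\st(u, L)$ for exactly the vertices $u$ it contains, and their union recovers $L$ as a simplicial complex.

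The main obstacle to be addressed is that multiple strong critical vertices share the value $0$, so Lemma \ref{lemma strong morse} B cannot be separately invoked to handle each critical vertex in its own subinterval. This is circumvented by observing that Lemma \ref{lemma strong morse} A applies to the interval $(0, \infty)$: since none of the $v_i$ is critical, the filtration recovers, step by step, the original sequence of elementary strong collapses, yielding $K \sco L$ intrinsically from $g$. Consequently, the regular CW-complex $\core_g(K)$ produced by the strong Morse reduction is obtained from $L$ by attaching no additional cells, so it inherits the canonical regular CW-structure of $L$ as a simplicial complex. Combined with the cell/simplex bijection established above, this gives $\core_g(K) = L$.
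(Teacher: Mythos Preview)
Your proof is correct and follows essentially the same approach as the paper: define $g$ so that the removed vertices sit at distinct positive levels in reverse order of removal while $V(L)$ sits at level $0$, then verify that each $v_i$ is descending dominated and each vertex of $L$ is strong critical. You fill in a couple of details the paper leaves implicit (the explicit identification $K_{g(v_i)} = K_i$ as full subcomplexes on the same vertex set, and the observation that Lemma~\ref{lemma strong morse}~A handles the entire interval $(0,\infty)$ so that no reattachment occurs). One small slip: in your sentence ``$K_{g(u)} = K_0 = L$'' the symbol $K_0$ clashes with your own collapse-sequence indexing, where $K_0 = K$; you mean the filtration sublevel complex at level $0$, which is indeed $L$ (equivalently $K_n$ in your indexing).
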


\begin{proof}
Let $v_1, v_2, \dots, v_n \in V(K)$ be the sequence of dominated vertices corresponding to a strong collapse from $K$ to $L$, i.e., define $K_n = K$ and $K_{i-1} = K_i \smallsetminus v_i$, with each $K_i \esc K_{i-1}$ and $K_0 = L$. Define a function $g \colon V(K) \to \mathbb{R}$ by
\[
g(v) = 
\begin{cases}
    i & \text{if } v = v_i, \\
    0 & \text{if } v \in V(L).
\end{cases}
\]

We claim that the strong internal core, $\core_g(K)$, coincides with $L$. By construction, each $v_i$ is dominated in $K_i$ by some vertex $a_i \in K_i$. Since $v_i$ is removed at stage $i$, and $a_i \in K_j$ for some $j < i$, it follows that $g(a_i) < g(v_i)$. Therefore, $v_i$ is descending dominated. On the other hand, any vertex $v \in V(L)$ satisfies $g(v) = 0$, and hence $g^{-1}((-\infty, g(v))) = \varnothing$. Thus, $v$ cannot be descending dominated and is a strong critical vertex.

Hence, the only strong critical vertices of $g$ are those in $L$, and the strong internal core, $\core_g(K)$, coincides with  $L$.
\end{proof}

\begin{theorem}[Connection to discrete Morse theory]\label{strong morse matching} Let $K$ be a finite simplicial complex and let $g\colon V(K) \to \RR$ be a real-valued map. Then, there exists a discrete Morse function $f\colon K\to \RR$ on $K$ such that its critical simplices are the simplices in $\{\st^\downarrow(w, K) : w \text{ is a strong critical vertex of } g\}$. Moreover,  $\core_{g}(K)\cong  \core_{f}(K)$, that is the strong internal core associated to  $g$ is precisely the internal core  associated to $f$.\end{theorem}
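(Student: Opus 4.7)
The plan is to construct an explicit acyclic matching $M$ on the face poset $\X(K)$ whose unmatched simplices coincide with $\bigcup_w \st^\downarrow(w,K)$, where $w$ ranges over the strong critical vertices of $g$, and then invoke Chari's correspondence between acyclic matchings and discrete Morse functions to produce $f$. First I would fix a total order $v_1 < v_2 < \cdots < v_n$ on $V(K)$ compatible with $g$ (so $g(v_i) \leq g(v_j)$ whenever $i < j$), refined inside each tie group so that descending dominated vertices precede strong critical ones. Every simplex $\sigma \in K$ then has a unique maximal vertex $v(\sigma)$ under this order, and $\sigma$ lies in the descending open star of $v(\sigma)$. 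For each descending dominated vertex $v_i$, choose a dominator $a_i$ with $g(a_i) < g(v_i)$, and define $M$ by the rule
\[
\sigma \ \longleftrightarrow\ \sigma \cup \{a_i\} \qquad \text{whenever } v(\sigma) = v_i \text{ and } a_i \notin \sigma.
\]
Since $v_i$ is dominated by $a_i$ in $K_{g(v_i)}$, the simplex $\sigma \cup \{a_i\}$ lies in $K$, and both paired simplices share the maximal vertex $v_i$, so the pairing is well defined. The unmatched simplices are exactly those whose maximal vertex is strong critical, which by the choice of tie-breaking equals $\bigcup_{w} \st^\downarrow(w,K)$ for strong critical $w$.

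Next I would verify that $M$ is acyclic. Matched pairs preserve the maximal vertex, so any hypothetical modified Hasse cycle would have to sit inside a single star $\st(v_i,K_i)$, where $K_i$ is the subcomplex generated by $v_1,\dots,v_i$ and $v_i$ is descending dominated. Using that $\lk(v_i,K_i)$ is still a simplicial cone with apex $a_i$ (by the argument in the proof of Lemma~\ref{lemma strong morse}), writing $\sigma = \{v_i\} \cup \tau$ identifies $M|_{\st(v_i,K_i)}$ with the standard cone matching on $a_i \ast L_0$ pairing $\tau$ with $\tau \cup \{a_i\}$, which is well known to be acyclic. Chari's theorem then yields a discrete Morse function $f\colon K \to \RR$ whose critical simplices are precisely the unmatched simplices of $M$, proving the first half of the statement.

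For the identification $\core_f(K) \cong \core_g(K)$, I would argue that both CW-complexes are the colimits of the same chain of pushouts. At every descending dominated $v_i$, the strong internal collapse of Theorem~\ref{theo1: strong internal collapse} is governed by the simplicial retraction $r_{a_i}\colon |K_i|\to |K_i\smallsetminus v_i|$, and the cone structure of $\lk(v_i,K_i)$ factors this retraction as the simultaneous execution of the elementary internal collapses associated to the pairs of $M|_{\st(v_i,K_i)}$. Aggregating over $i$ rewrites the sequence of internal strong collapses producing $\core_g(K)$ as exactly the sequence of internal elementary collapses that Proposition~\ref{internal collapse} and Proposition~\ref{internal core vs weak core} attach to $f$ when building $\core_f(K)$. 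Because the two processes perform the same pushouts at each stage, the resulting regular CW-complexes agree cell-by-cell with identical attaching maps.

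The main obstacle will lie in the last paragraph: matching the attaching maps of $\core_f(K)$ and $\core_g(K)$ on the nose rather than merely up to homotopy, since Forman's construction a priori only controls them up to cellular equivalence. The key step is to check that the composition of the elementary internal collapses dictated by $M$ on each descending star $\st(v_i,K_i)$ is literally the simplicial cone retraction $r_{a_i}$ used in Theorem~\ref{theo1: strong internal collapse}. This is transparent at the level of face posets—every matched pair $(\sigma,\sigma \cup \{a_i\})$ realizes one step of the retraction sending $v_i \mapsto a_i$—but requires careful bookkeeping to see that the induced maps on geometric realizations assemble to the same retraction.
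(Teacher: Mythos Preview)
Your proof is correct and takes essentially the same approach as the paper: explicitly construct the acyclic matching by pairing $\sigma \leftrightarrow \sigma \cup \{a\}$ via the dominator of a distinguished vertex of $\sigma$, check it is a well-defined acyclic matching, and invoke Chari's correspondence. Your bookkeeping---pivoting on the overall maximal vertex $v(\sigma)$ in a refined total order rather than on the maximal \emph{descending dominated} vertex as the paper does, and localising the acyclicity check to a single descending star where it reduces to the standard cone matching---is slightly cleaner than the paper's case analysis; your sketch of $\core_g(K)\cong\core_f(K)$ via identifying the two sequences of pushouts also goes beyond the paper, which does not elaborate on that identification.
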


\begin{proof} Let $C = \{\st^\downarrow(w, K) : w \text{ is a strong critical vertex of } g\}\subseteq K$.
Given the correspondence between discrete Morse functions and acyclic matchings  \cite{chari_00}, 
we will construct an acyclic matching $M$ with critical simplices $C$ as follows. For each simplex $\sigma \in K$ containing descending dominated vertices, let  
\[
v_\sigma = \arg \max\{g(v) \colon v \in \sigma \text{ is a descending dominated vertex}\},
\]
and let $a_\sigma$ denote the vertex dominating $v_\sigma$. The pairing $M$ is then defined as follows: if $a_\sigma \in \sigma$, we include the pair $(\sigma \setminus \{a_\sigma\}, \sigma)$ in $M$; otherwise, we include the pair $(\sigma, \sigma \cup \{a_\sigma\})$ in $M$.

To verify that $M$ is a matching, assume for contradiction that a simplex in $K$ is matched to more than one simplex. Without loss of generality, suppose $(\sigma, \{a\}\cup \sigma) \in M$ with $v_\sigma$ the associated descending dominated vertex. If there exists another pair $(\{a\}\cup\sigma, \{a',a\}\cup\sigma) \in M$, then $a'$ dominates a vertex $v' \in \sigma$. Since $v'$ must also satisfy $v' = \arg\max\{g(v) \colon v \in \sigma$ is descending dominated$\}$
 this contradicts the uniqueness of $v_\sigma$. Alternatively, if there exists a pair $(\sigma', \{a\}\cup\sigma) \in M$, then $a\sigma = \{a'\}\cup\sigma'$ implies $\sigma = \{a'\}\cup\tau$ and $\sigma' = \{a\}\cup\tau$ for some simplex $\tau$. This forces $\tau$ to contain two distinct dominated vertices, $v_\sigma$ and $v'$, each maximizing $g(v)$, which is again a contradiction. Therefore, $M$ is a valid matching.

To establish that $M$ is acyclic, consider any pair $(\sigma, \{a\}\cup\sigma) \in M$. For any $\tau \succ \sigma$, we claim that $(\tau \smallsetminus \{a'\}, \tau) \notin M$ for any $a' \neq a$. Since $\tau \succ \sigma$, the simplex $\tau$ contains $v_\sigma$. If $v_\sigma$ also satisfies $v_\sigma = \arg \max\{g(v) \colon v \in \tau \text{ is a descending dominated vertex}\}$, then $(\tau, \{a\}\cup\tau) \in M$, as $a$ dominates $v_\sigma$. Alternatively, if there exists another dominated vertex $w \in \tau$ such that $w > v_\sigma$, then $(\tau, \{a_w\}\cup \tau) \in M$, where $a_w$ dominates $w$. In either case, $(\tau \smallsetminus \{a'\}, \tau) \notin M$ for any $a' \neq a$, ensuring that $M$ is acyclic.
This completes the proof.
\end{proof}



\begin{remark}[Connection to Bestvina--Brady Morse theory]\label{Bestvina-Brady Morse}
Let $K$ be a finite simplicial complex. A Bestvina--Brady Morse function $h \colon |K| \to \mathbb{R}$ is given by an injective function $g \colon V(K) \to \mathbb{R}$ that extends linearly over simplices. In this setting, the critical points of $h$ are precisely the vertices of $K$, and the changes in homotopy type of sublevel sets occur at critical levels by attaching cones over the descending links of the corresponding critical vertices.
In our framework, we also consider a vertex function $g$, but classify each vertex as either \emph{strong critical} or \emph{descending dominated}, depending on whether its descending link forms a simplicial cone. This distinction not only describes the local homotopy behaviour (descending dominated vertices have coned descending links and, hence, do not change the homotopy type of sublevel complexes), but crucially allows a global, purely combinatorial reconstruction of the reduced complex $\core_g(K)$.

Forman's discrete Morse theory fits within the Bestvina--Brady framework, as observed by Zaremsky~\cite{zaremsky_22}, and can also be interpreted through the lens of strong discrete Morse theory. Specifically, any injective\footnote{Any discrete Morse function can be replaced by an injective one with the same set of critical simplices and matched pairs; see the beginning of the proof of Theorem 3.3 in \cite{forman_98}.} discrete Morse function $f \colon K \to \mathbb{R}$ induces a Bestvina-Brady Morse function $g \colon V(K') \to \mathbb{R}$ such that critical simplices of $f$ correspond to vertices in $K'$ whose descending links are barycentric subdivisions of the boundaries of those simplices --- and hence homeomorphic to spheres. Each matched pair $(\sigma, \tau)$, with $\sigma \prec \tau$ and $f(\sigma) > f(\tau)$, satisfies that $\sigma$ is descending dominated by $\tau$, and $\lk^\downarrow(\tau, K')$ is a topological cone on the unique vertex $v \in \tau \setminus \sigma$ (though not necessarily a simplicial one). This suggests a natural extension of our theory to cases where strong critical vertices have descending links that are topological cones.
\end{remark}

\section{The construction of the internal core}\label{sec:reconstruction core}

Given a CW-complex $K$, its face poset $\X(K)$ encodes the incidence relations among its cells: $e \leq e'$ in $\X(K)$ if $\overline{e} \subseteq \overline{e'}$. While this poset does not determine the homotopy type of $K$ in general, in the case of \emph{regular} CW-complexes it serves as a complete combinatorial model: the \textit{order complex}\footnote{The \textit{order complex} of a finite poset $X$ is the simplicial complex whose simplices are the non-empty totally ordered subsets of $X$.} of $\X(K)$, denoted $\K(\X(K))$, is homotopy equivalent to $K$ (see \cite{bjorner_85}).

Let $X$ be a finite poset. An \emph{acyclic matching} on $X$ is a collection $M$ of disjoint pairs $\{x, y\} \subseteq X$ such that the quotient set $X/_\sim$, where $x \sim y$ if $\{x, y\} \in M$, admits a partial order induced by  the original order on $X$ (the relation $[x]\leq [y]\in X/_\sim$ if $x\leq y$ in  $X$ is antisymmetric).

Discrete Morse functions on a simplicial complex $K$ are in bijection with acyclic matchings on $\X(K)$, where the unmatched elements correspond to the critical simplices \cite{chari_00}. Given an acyclic matching $M$ on $\X(K)$, one can construct auxiliary posets that reflect the structure of the Morse reduced CW-complex. Inspired by Nanda's work on discrete Morse theory and localization of categories \cite{nanda_19}, we introduce the following definitions.

\begin{definition}
Let $K$ be a simplicial complex, and let $M$ be an acyclic matching on $\X(K)$. The \emph{localization poset} $\Loc_M(K)$ is the poset obtained by identifying matched simplices in $\X(K)$. The \emph{critical poset} $\Crit_M(K)$ is the subposet of $\Loc_M(K)$ consisting of equivalence classes of unmatched (i.e., critical) simplices.
\end{definition}

In general, neither $\Crit_M(K)$ nor $\Loc_M(K)$ determine the homotopy type of $K$, even when $K$ is regular or simplicial (see Example~\ref{ex:localization_critical_poset}). However, when $M$ arises from internal strong collapses, Theorem~\ref{strong morse matching} implies that $\Crit_M(K)$ is a finite model of $K$.

\begin{corollary}
Let $K$ be a simplicial complex and let $g \colon V(K) \to \mathbb{R}$ be a real-valued function. Then, there exists an acyclic matching $M_g$ on $\X(K)$ such that the critical poset $\Crit_{M_g}(K)$ is the face poset of $\core_g(K)$.
In particular, $\Crit_{M_g}(K)$ is a finite model of $K$.
\end{corollary}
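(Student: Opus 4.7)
The plan is to combine Theorem~\ref{strong morse matching} with Chari's bijection between discrete Morse functions and acyclic matchings to produce $M_g$, and then to identify the poset $\Crit_{M_g}(K)$ with the face poset of $\core_g(K)$ by tracing the construction of the strong internal core in Theorem~\ref{theo2: strong internal collapse}. The finite model statement will then follow from regularity of $\core_g(K)$ and Björner's theorem \cite{bjorner_85}.

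First I would apply Theorem~\ref{strong morse matching} to obtain a discrete Morse function $f \colon K \to \mathbb{R}$ whose critical simplices are exactly the simplices in the descending open stars of the strong critical vertices of $g$, and for which $\core_g(K) \cong \core_f(K)$. By Chari's theorem, the pairing $\{\{\sigma,\tau\} : M_f(\sigma)=\{\tau\}\}$ (as in Definition~\ref{def:Morse_function}) is an acyclic matching on $\X(K)$; let $M_g$ denote this matching. By definition, the unmatched elements of $M_g$ are the critical simplices of $f$, and these are in bijection with the cells of $\core_g(K)$ by Theorem~\ref{strong Morse theorem}. This gives a bijection on underlying sets between $\Crit_{M_g}(K)$ and the face poset $\X(\core_g(K))$.

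Next I would show this bijection is an isomorphism of posets. Recall that the order on $\Crit_{M_g}(K)$ is inherited from $\Loc_{M_g}(K) = \X(K)/{\sim}$, which identifies matched simplices. Unwinding the inductive construction in the proof of Theorem~\ref{theo1: strong internal collapse}, each elementary strong collapse along a dominated vertex $v$ with dominator $a$ identifies every pair $(\sigma, \sigma \cup \{a\})$ with $v \in \sigma$, $a \notin \sigma$, to a single cell in the intermediate regular CW-complex, via the retraction $r_a$ of \eqref{eq:retraction collapse}. Iterating yields a cellular map $\bar r \colon |K| \to \core_g(K)$ whose fibres on cells are precisely the equivalence classes of $M_g$. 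Consequently, for critical simplices $\sigma, \tau$ of $f$, the inclusion $\overline{\bar r(\sigma)} \subseteq \overline{\bar r(\tau)}$ holds in $\core_g(K)$ if and only if there is a chain of simplices in $\X(K)$ connecting $\sigma$ to $\tau$ modulo the identifications of $M_g$; that is, if and only if $[\sigma] \leq [\tau]$ in $\Loc_{M_g}(K)$.

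The main obstacle is verifying the ``only if'' direction of the last equivalence: that a face relation between critical cells in $\core_g(K)$ forces the existence of such a chain in $\X(K)$ modulo $M_g$. I expect to handle this by induction on the number of elementary strong collapses, exploiting that at each step the three cases in the proof of Theorem~\ref{theo1: strong internal collapse} (identity, relabelling, or genuine quotient via $r|_A \colon A \to B$) preserve the property that the boundary of the new cell is the image under $r$ of the original boundary. Hence every incidence in $\core_g(K)$ is induced from an incidence in $K$ through a zigzag of matched pairs, which is exactly what is needed.

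Finally, the ``in particular'' claim is immediate: Theorem~\ref{strong Morse theorem} shows that $\core_g(K)$ is a regular CW-complex homotopy equivalent to $K$, so by \cite{bjorner_85} the order complex of its face poset has the homotopy type of $|K|$. Since this face poset has just been identified with $\Crit_{M_g}(K)$, we conclude that $\Crit_{M_g}(K)$ is a finite model of $K$.
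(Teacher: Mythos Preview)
Your proposal is correct and aligns with the paper's approach: the corollary is stated without proof, as an immediate consequence of Theorem~\ref{strong morse matching} together with the regularity of $\core_g(K)$ from Theorem~\ref{strong Morse theorem} and Bj\"orner's theorem. One minor simplification: the acyclic matching $M_g$ is already constructed explicitly in the proof of Theorem~\ref{strong morse matching} (as the pairs $(\sigma\setminus\{a_\sigma\},\sigma)$), so your detour through Chari's bijection from $f$ back to a matching is unnecessary---you can take $M_g$ directly from that proof.
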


For details on the algorithmic construction of the critical poset, see Appendix \ref{appendix}, Algorithm~\ref{alg:crit}.

\begin{example}\label{ex:localization_critical_poset} Let $K$ be the boundary of the 3-simplex. Consider the acyclic matching depicted in Figures \ref{fig:boundary 3-simplex} and \ref{fig: loc and crit poset morse}. The posets $\Loc_M(K)$ and $\Crit_M(K)$ are contractible, and do not reflect the homotopy type of $K\simeq S^2$. 
\begin{figure}[htb!]
    \centering
    \includegraphics[width=0.25\linewidth]{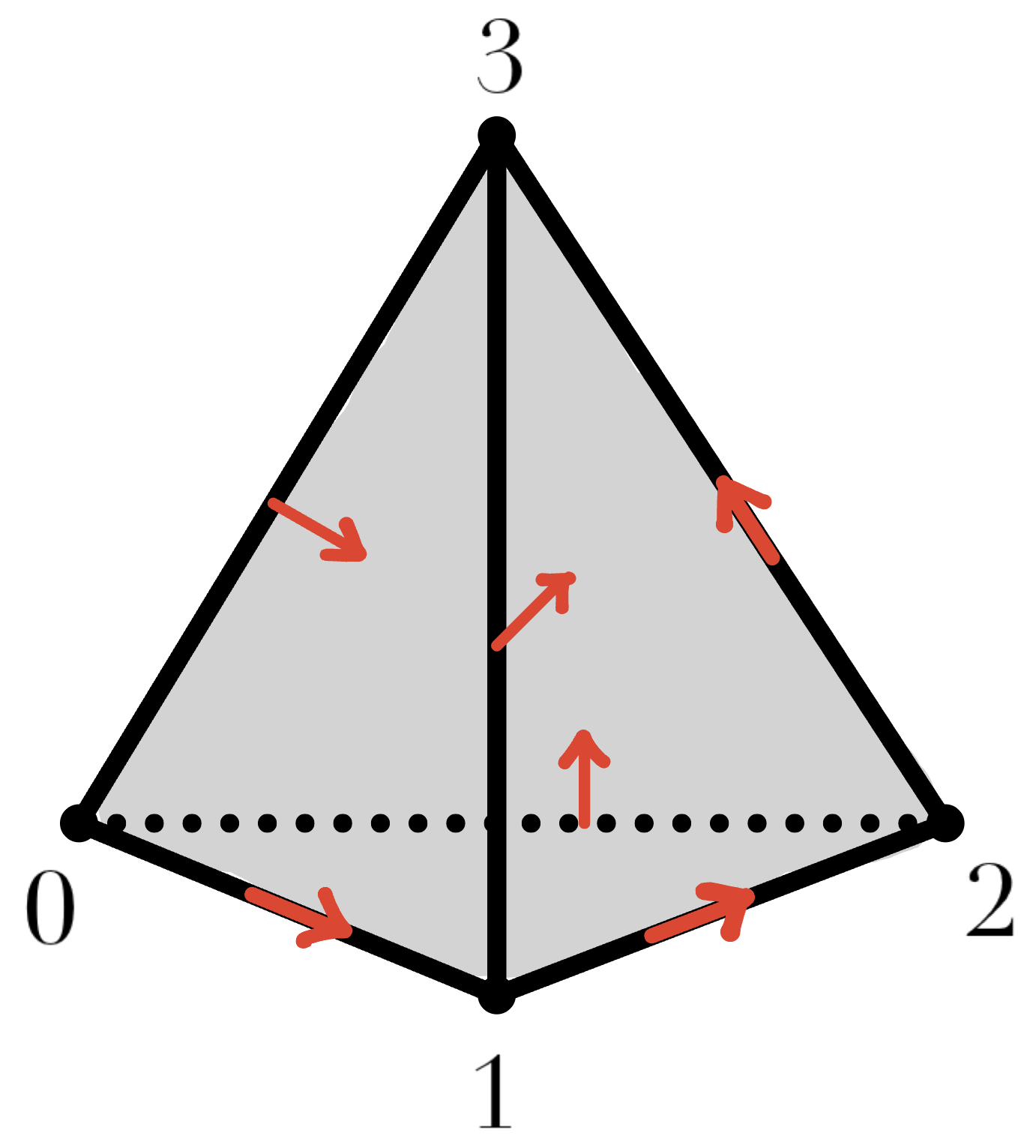}
    \caption{The boundary of a 3-simplex endowed with a function $ g \colon V(K) \to \mathbb{R} $ on vertices, together with an acyclic matching $ M $ on the simplices of $ K $. Red arrows indicate matched pairs of simplices, oriented from lower- to higher-dimensional ones. By abuse of notation, and due to the injectivity of $ g $, we identify each vertex with its image under $ g $.}
    \label{fig:boundary 3-simplex}
\end{figure}

\begin{figure}[htb!]
\[
\hspace{-10pt}\hspace{-10pt}\xymatrix@R=2.95em@C=0.3em{
   & \bullet_{(0,1,2)} \ar@{-}[dl] \ar@{-}[d] \ar@{-}[drr] & 
   \bullet_{(0,1,3)} \ar@{-}[dll] \ar@[red]@{-}[d] \ar@{-}[drr] & 
   \bullet_{(0,2,3)} \ar@[red]@{-}[dll] \ar@{-}[dl] \ar@{-}[drr] & 
   \bullet_{(1,2,3)} \ar@{-}[dl] \ar@[red]@{-}[d] \ar@{-}[dr]\\
   \bullet_{(0,1)} \ar@[red]@{-}[dr] \ar@{-}[drr] &
   \bullet_{(0,2)} \ar@{-}[d] \ar@{-}[drr] &
   \bullet_{(0,3)} \ar@{-}[dl] \ar@{-}[drr] & 
   \bullet_{(1,2)} \ar@{-}[d] \ar@[red]@{-}[dl] & 
   \bullet_{(1,3)} \ar@{-}[d] \ar@{-}[dll] & 
   \bullet_{(2,3)} \ar@{-}[dl] \ar@[red]@{-}[dll]\\
   & \bullet_{(0)} & \bullet_{(1)} & \bullet_{(2)} & \bullet_{(3)}\\
   &&\X(K)\\
}
\hspace{10pt}
\xymatrix@R=0.55em@C=0.05em{
&_{(0,1,2)}\ar@{-}[d]\\
&_{[(0,2),(0,2,3)]}\ar@{-}[d]\\
&_{[(0,3),(0,1,3)]}\ar@{-}[dl] \ar@{-}[dr]\\
_{[(0),(0,1)]}\ar@{-}[dr]&&_{[(1,3),(1,2,3)]}\ar@{-}[dl]\\
&_{[(1),(1,2)]}\ar@{-}[d]\\
&_{[(2), (2,3)]}\ar@{-}[d]\\
&_{(3)}\\
&\Loc_M(K)\\
}\hspace{10pt}
\xymatrix@R=3.25em{
_{(0,1,2)}\ar@{-}[dd]\\
\\
_{(3)}\\
\Crit_M(K)\\
}
\]
\caption{The face poset $\X(K)$, the localization poset $\Loc_M(K)$ and the critical poset $\Crit_M(K)$ associated to the acyclic matching $M$ in red (c.f. Fig \ref{fig:boundary 3-simplex}).}\label{fig: loc and crit poset morse}
\end{figure}
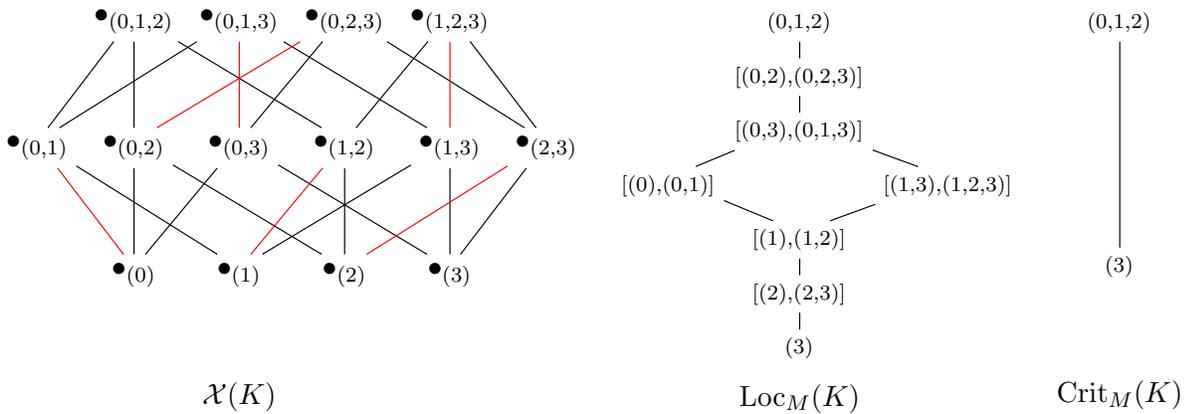

On the other hand, consider the function $g\colon V(K)\to \RR$ on the vertices depicted in Figure \ref{fig:boundary 3-simplex}. For the associated acyclic matching $M$, the  critical poset $\Crit_M(K)$ recovers the homotopy type of $K$, and it is the face poset of the strong internal core (see Figure \ref{fig:face crit poset strong morse}).
\begin{figure}[htb!]
\begin{minipage}{0.15\textwidth}
\includegraphics[width = 0.84\textwidth]{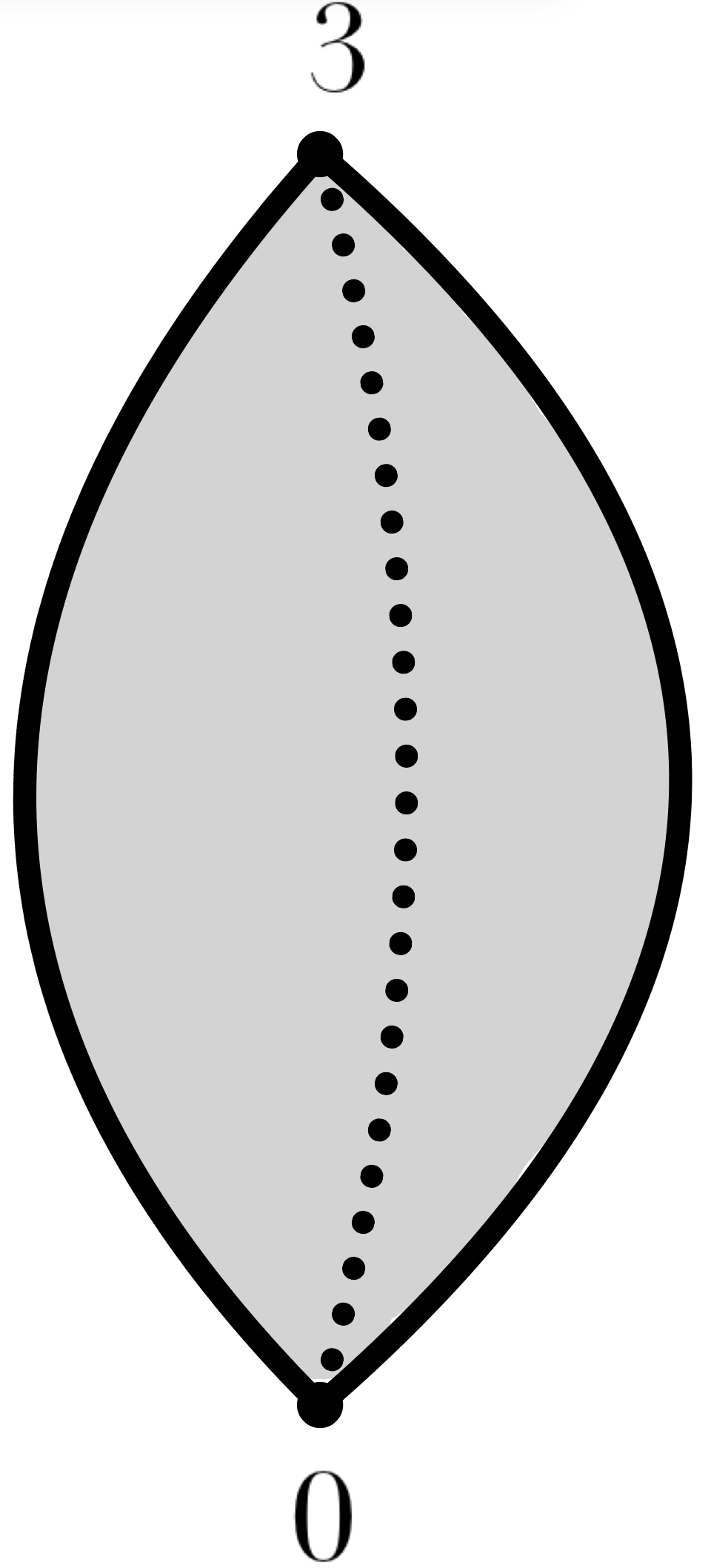}

\vspace{15pt}

$\core_g(K)$
\end{minipage}
\hspace{5pt}
\begin{minipage}{0.82\textwidth}
$\xymatrix@R=2.95em@C=0.3em{
   & \bullet_{(0,1,2)} \ar@{-}[dl] \ar@[red]@{-}[d] \ar@{-}[drr] & 
   \bullet_{(0,1,3)} \ar@{-}[dll] \ar@{-}[d] \ar@{-}[drr] & 
   \bullet_{(0,2,3)} \ar@{-}[dll] \ar@{-}[dl] \ar@{-}[drr] & 
   \bullet_{(1,2,3)} \ar@{-}[dl] \ar@{-}[d] \ar@{-}[dr]\\
   \bullet_{(0,1)} \ar@[red]@{-}[drr] \ar@{-}[dr] &
   \bullet_{(0,2)} \ar@{-}[d] \ar@{-}[drr] &
   \bullet_{(0,3)} \ar@{-}[dl] \ar@{-}[drr] & 
   \bullet_{(1,2)} \ar@[red]@{-}[d] \ar@{-}[dl] & 
   \bullet_{(1,3)} \ar@{-}[d] \ar@{-}[dll] & 
   \bullet_{(2,3)} \ar@{-}[dl] \ar@{-}[dll]\\
   & \bullet_{(0)} & \bullet_{(1)} & \bullet_{(2)} & \bullet_{(3)}\\
   &&\X(K)\\
}
\hspace{10pt}
\xymatrix@R=2.95em@C=0.3em{
   \bullet_{(0,1,3)}  \ar@{-}[d] \ar@{-}[dr] & 
   \bullet_{(0,2,3)} \ar@{-}[dl] \ar@{-}[dr] & 
   \bullet_{(1,2,3)} \ar@{-}[dl] \ar@{-}[d] \\
   \bullet_{(0,3)} \ar@{-}[d] \ar@{-}[drr] & 
   \bullet_{(1,3)} \ar@{-}[dr] \ar@{-}[dl] & 
   \bullet_{(2,3)} \ar@{-}[dll] \ar@{-}[d]\\
   \bullet_{(0)} & & \bullet_{(3)}\\
   &\Crit_{M_g}(K)\\
}
$
\end{minipage}
\caption{The strong internal core associated to the map $g\colon V(K)\to \RR$ on vertices (Fig. \ref{fig:boundary 3-simplex}), the face poset $\X(K)$ (with the acyclic matching $M_g$ induced by internal strong collapses, in red) and the critical poset $\Crit_{M_g}(K)$. Here, $\core_g(K)$ has two 0-cells, three 1-cells and  three 2-cells.}\label{fig:face crit poset strong morse}
\end{figure}

\end{example}

\begin{remark}
Consider the order-preserving maps 
\[
\Q \colon \X(K) \to \Loc_M(K) \quad \text{and} \quad \J \colon \Crit_M(K) \to \Loc_M(K),
\]
where $ \Q $ denotes the quotient map and $ \J $ the inclusion map.
 In future work, we will establish general conditions on the acyclic matching $M$ under which these maps induce homotopy equivalences at the level of simplicial complexes. In particular, we will show that when $M$ is induced by internal strong collapses, both $\Q$ and $\J$ induce homotopy equivalences.
\end{remark}

\section{Algorithmic simplification of simplicial complexes}\label{sec:random_reduction}

Let $K$ be a simplicial complex of dimension $N$. We summarize below the different core notions associated to $K$:

\begin{itemize}
  \item \textbf{Strong core:} A subcomplex $L \subseteq K$ such that $K \sco L$. The minimal strong core (i.e. with no dominated vertices) is unique up to isomorphism.

  \item \textbf{Weak core:} A subcomplex $L \subseteq K$ such that $K \co L$. A minimal weak core (i.e. with no free faces) is not necessarily unique.

  \item \textbf{Internal core:} Given a discrete Morse function $f \colon K \to \mathbb{R}$, the internal core $\core_f(K)$ is the CW-complex obtained by performing the sequence of internal collapses determined by $f$. It satisfies $K \Nplusonedef \core_f(K)$. The internal core is not necessarily regular.

  \item \textbf{Strong internal core:} Given a function $g \colon V(K) \to \mathbb{R}$ defined on the vertices of $K$, the strong internal core, $\core_g(K)$, is obtained by the sequence of internal strong collapses guided by $g$. It satisfies $K \Nplusonedef \core_g(K)$, and the resulting space is a regular CW-complex.
\end{itemize}

The diagram below shows implication relations among the core constructions.
\[
\begin{tikzcd}[row sep=small, column sep=normal]
  & \text{Internal Core} & \\
  \text{Strong Internal Core} \arrow[ur, "\text{Thm.~\ref{strong morse matching}}"] & & \text{Weak Core} \arrow[ul, "\text{Prop.~\ref{internal core vs weak core}}"'] \\
  & \text{Strong Core} \arrow[ul, "\text{Prop.~\ref{prop:strong-vs-stronginternal}}"] \arrow[ur, "\text{\protect\cite[Rmk 2.4]{barmak_minian_12}}"'] &
\end{tikzcd}
\]

For any method of reduction of simplicial complexes, computing the associated core requires the choice of an \textit{ordered} sequence of collapses. While the minimal strong core is unique (up to isomorphism) regardless of the sequence of strong collapses, a minimal weak core does depend on the chosen sequence. Similarly, the internal core of a simplicial complex depends on the discrete Morse function. Note that (strong) discrete Morse theory can be seen as a systematic procedure for performing a sequence of (strong) reductions. These reductions may consist of (strong) internal collapses, or, when no free face (resp. dominated vertex) is available, the removal of a maximal simplex (resp. vertex). 
To address the dependence on the choice of the reduction sequence, one approach is to explore all possible sequences. However, this can be computationally prohibitive due to the exponential growth in the number of options. Alternatively, random reductions --- where reductions are chosen randomly among the available options at each step --- provide an efficient strategy to deal with this dependence.

To compute the minimal strong internal core, we iteratively apply a greedy algorithm that performs randomly chosen internal strong collapses whenever a dominated vertex is available. If no dominated vertex exists, we randomly select a vertex to be removed (see Appendix \ref{appendix},  Algorithm \ref{alg:internal_core_computation} for further details). As part of this procedure, we construct the acyclic matching associated to the internal strong collapses as stated in Theorem \ref{strong morse matching}.  Notice that this algorithm can also be applied to study $(N+1)$-deformations, as the strong internal core $(N+1)$-deforms to the original complex. A similar procedure can be applied for the randomized computation of a minimal weak core of a simplicial complex.

In general, there is no combinatorial description of the internal core determined by a general discrete Morse function. Benedetti and Lutz \cite{benedetti_14} performed a computational analysis of the topology of simplicial complexes using randomized discrete Morse theory, studying the number of critical cells arising from random discrete Morse functions. 

We analyse the examples from the \textit{Library of Triangulations} \cite{Triangulation_Library} and compare the results of minimals weak core, strong core, and strong internal core, cases where the reduced complex can be completely reconstructed.
We perform 10 iterations of the computation of random simple and strong internal cores for the simplicial complexes listed in Table \ref{tab:simplicial_complex_reduction_10it}, and 100 iterations for those in Table \ref{tab:simplicial_complex_reduction_100it}. Additionally, we implement a combined reduction strategy, where we first compute a random minimal weak core of the original complex and subsequently compute a random strong internal core of the resulting complex.
Tables \ref{tab:simplicial_complex_reduction_10it} and \ref{tab:simplicial_complex_reduction_100it}   present the mean sizes of the different cores and the corresponding computation times\footnote{The experiments were conducted on a MacBook Pro with an Apple M3 Pro chip, 12-core CPU, 18-core GPU, and 18 GB of unified memory.}. The implementation was developed in {\fontfamily{lmss}\selectfont SageMath} \cite{sagemath}, and the code is available at \url{https://github.com/ximenafernandez/Strong-Morse-Theory}.

We observe that in cases where there are neither dominated vertices nor free faces, internal strong collapses allow  significant reductions in the size of the complex, with reductions  of 30\% to 40\% in instances as \texttt{BH\_3}, \texttt{BH\_4}, \texttt{Bernadette\_Sphere}, and \texttt{non\_PL} and 50\% to 60\% in the large examples \texttt{trefoil\_bsd} and \texttt{Hom\_C5\_K4}. Notably, in many of these cases, the vector of critical simplices in a (classical) discrete Morse function does not fully determine the homotopy type. For example, the 18-vertex non-PL triangulation of the 5-dimensional sphere $S^5$ (\texttt{non\_PL}) admits $(1, 0, 0, 2, 2, 1)$ as its smallest discrete Morse vector (see \cite{benedetti_14}).
In several large cases with available simple collapses, the strong Morse theory approach yields a more efficient model, significantly reducing the complex size by around 80\% and computation times by over 90\%  in examples such as \texttt{knot} and \texttt{bing}.

On the other hand, a few cases exhibit simplicial complexes with small weak cores but large strong internal cores (on average). Examples include \texttt{B\_3\_9\_18}, \texttt{trefoil\_arc}, \texttt{rudin}, and \texttt{triple\_trefoil\_arc}. In these scenarios, the combined strategy emerges as the optimal choice: preprocessing the simplicial complex by computing its minimal weak core before determining the strong internal core.

\begin{table}[htb!]
\centering
\resizebox{\textwidth}{!}
{
\begin{tabular}{|c|r|r|r|r|r|}
\hline
\multirow{2}{*}{\textbf{Name example}} & \multicolumn{5}{c|}{\textbf{Number of simplices/Time per iteration (sec)}} \\ \cline{2-6} 
 & \textbf{Original} & \textbf{Minimal strong core} & \textbf{Minimal weak core} & \textbf{Strong internal core} & \makecell{\textbf{ Strong internal core} \\ \textbf{of minimal weak core}} \\ \hline
\textbf{\textcolor{blue}{bing}}& 8131 & 2259 & 2377.6 & \textbf{1665.80} & 1581.20 \\ 
 & -- & 185.7559  & 1532.325& \textcolor{blue}{112.5924} & 1586.5839 \\
\hline
non\_4\_2\_colorable& 5982 & 5982 & 5982.00 & 5838.20 & 5857.40 \\ 
 & -- & 2.2379 & 1.5879 & 78.1108  & 79.2957 \\
\hline
\textbf{Hom\_C5\_K4}& 6240 & 6240 & 6240.00 & \textbf{4632.40} & 4669.60 \\ 
 & -- & 3.0053  & 1.6739 & 139.9162 & 141.8681 \\
\hline
\textbf{trefoil\_bsd}& 5876 & 5876 & 5876.00 & \textbf{2072.20} & 2134.80 \\ 
 & -- & 3.7306  & 1.5139 & 102.2188& 110.9461 \\
\hline
\textbf{\textcolor{blue}{knot}}& 6203 & 1639 & 1946.20 & \textbf{1184.00} & 1271.60 \\ 
& -- & 86.4477 & 2088.4905 & \textcolor{blue}{56.3632}  & 2130.4097 \\
\hline
\end{tabular}
}
\caption{Comparison of reduction methods for simplicial complexes, averaged over 10 iterations. \textbf{Bold} values indicate cases where strong internal collapses yield the most significant size reduction. \textcolor{blue}{Blue} values indicate cases where strong internal collapses also lead to a significant improvement in execution time compared to other methods.}
\label{tab:simplicial_complex_reduction_10it}
\end{table}

\begin{table}[H]
\centering
\resizebox{\textwidth}{!}
{
\begin{tabular}{|c|r|r|r|r|r|}
\hline
\multirow{2}{*}{\textbf{Name example}} & \multicolumn{5}{c|}{\textbf{Number of simplices/Time per iteration (sec)}} \\ \cline{2-6} 
 & \textbf{Original} & \textbf{Minimal Strong core} & \textbf{Minimal weak core} & \textbf{Strong internal core} & \makecell{\textbf{ Strong internal core} \\ \textbf{of minimal weak core}} \\ \hline
\textbf{Abalone} &101 & 101 & 101.00 & \textbf{73.30} & 70.58 \\
 & -- & 0.0273 & 0.0009 & 0.0484 & 0.0486 \\
\hline
BH &131 & 131 & 131.00 & 100.68 & 99.60 \\
 & -- & 0.0185 & 0.0015 & 0.0839 & 0.0824 \\
\hline
\textbf{BH\_3} &301 & 301 & 301.00 & \textbf{206.32} & 207.78 \\
 & -- & 0.0401 & 0.0068 & 0.4438 & 0.4473 \\
\hline
\textbf{BH\_4} &401 & 401 & 401.00 & \textbf{277.06} & 274.60 \\
 & -- & 0.0564 & 0.0124 & 0.8285 & 0.8371 \\
\hline
\textbf{BH\_5} &501 & 501 & 501.00 & \textbf{349.08} & 343.86 \\
 & -- & 0.0806 & 0.0197 & 1.4011 & 1.4009 \\
 \hline
d2\_n8\_3torsion &49 & 49 & 49.00 & 38.48 & 38.38\\
 & -- & 0.0060 & 0.0003 & 0.0135 & 0.0138 \\
 \hline
d2\_n8\_4torsion &53 & 53 & 53.00 & 43.66 & 43.98 \\
 & -- & 0.0060 & 0.0003 & 0.0135 & 0.0138 \\
 \hline
d2\_n9\_5torsion &65 & 65 & 65.00 & 57.12 & 55.68 \\
 & -- & 0.0073 & 0.0005 & 0.0193 & 0.0204 \\
\hline
dunce\_hat& 49 & 49 & 49.00 & 37.60 & 38.84 \\ 
 & -- & 0.0056 & 0.0003 &  0.0135 &  0.0129 \\
 \hline
d2n12g6& 122 & 122 & 122.00 & 118.80 & 118.66 \\ 
 & -- & 0.0140 &  0.0013 &  0.00602 &  0.0612 \\
\hline
regular\_2\_21\_23\_1& 266 & 266 & 266.00 & 259.52 & 259.52 \\ 
 & -- & 0.0333 &  0.0058 &  0.2230 &  0.2349 \\
 \hline
rand2\_n25\_p0.328& 1076 & 1076 & 1074.00 & 1069.90 & 1067.18 \\ 
 & -- & 0.1965& 0.2955& 1.3532& 1.6397 \\
 \hline
dunce\_hat\_in\_3\_ball& 75 & 1 & 1.00 & 1.00 & 1.00 \\ 
 & -- & 0.0149 & 0.0151 & 0.0119 & 0.0153 \\
 \hline
\textbf{Barnette\_sphere}& 92 & 92 & 92.00 & \textbf{40.66} & 39.58 \\ 
 & -- & 0.0130 & 0.0005 & 0.0182 & 0.0195 \\
\hline
\textcolor{red}{B\_3\_9\_18}& 103 & 59 & \textcolor{red}{1.00} & 31.38 & 1.00 \\ 
& -- & 0.0220 & 0.0286 & 0.0212 & 0.0287 \\
\hline
\textcolor{red}{trefoil\_arc}& 193 & 193 & \textcolor{red}{1.76} & 148.68 & 1.62 \\ 
 & -- & 0.0281 & 0.1257 & 0.0865 & 0.1262 \\
 \hline
trefoil& 250 & 250 & 250.00 & 206.98 & 211.12 \\ 
 & -- & 0.0369 & 0.0031 & 0.1238 & 0.1298 \\
 \hline
\textcolor{red}{rudin}& 215 & 215 & \textcolor{red}{1.00} & 137.74 & 1.00 \\ 
 & -- & 0.0329 & 0.1668 & 0.1057 & 0.1670 \\
 \hline
poincare& 392 & 392 & 392.00 & 361.80 & 364.24 \\ 
 & -- & 0.0631 & 0.0074 & 0.3636 & 0.2744 \\
 \hline
double\_trefoil& 400 & 400 & 400.00 & 380.20 & 383.08 \\ 
 & -- & 0.0643 & 0.0078 & 0.2714 & 0.2797 \\
 \hline
\textcolor{red}{triple\_trefoil\_arc}& 449 & 449 &  \textcolor{red}{172.36}&  438.62& 157.60 \\ 
 & -- & 0.0746 & 1.3937 & 0.3337 & 1.5215 \\
 \hline
triple\_trefoil& 536 & 536 & 536.00 & 526.50 & 525.70 \\ 
 & -- & 0.0922 & 0.0141 & 0.4367 & 0.4467 \\
 \hline
hyperbolic\_dodecahedral\_space& 718 & 718 & 718.00 & 708.82 & 709.36 \\ 
& -- & 0.1313 & 0.0241 & 23.3319 & 74.4473 \\
\hline
S\_3\_50\_1033& 4232 & 4232 & 4232.00 & 4198.36 & 4295.20 \\ 
& -- & 1.5958 & 0.8286 & 16.9282 & 17.73918 \\
\hline
600\_cell& 2640 & 2640 & 2640.00 & 2343.46 & 2349.68 \\ 
 & -- & 0.6912 & 0.3057 & 16.5970 & 16.9279 \\
 \hline
CP2& 255 & 255 & 255.00 & 219.78 & 219.68 \\ 
 & -- & 0.0464 & 0.0020 & 0.0798 & 0.0823 \\
 \hline
RP4& 991 & 991 & 991.00 & 942.36 & 937.34 \\ 
 & -- & 0.2400 & 0.0293 & 0.7976 & 0.8290 \\
 \hline
K3\_16& 1704 & 1704 & 1704.00 & 1691.34 & 1690.62 \\ 
 & -- & 0.4895 & 0.09545 & 1.61122 & 1.7076 \\
 \hline
K3\_17& 1854 & 1854 & 1854.00 & 1835.32 & 1836.22 \\ 
 & -- & 0.6331  & 0.1108 & 1.9267 & 2.0365 \\
 \hline
RP4\_K3\_17& 2813 & 2813  & 2813.00 & 2757.98 & 2751.66 \\ 
 & -- & 0.9971  & 0.2429 & 5.2140 & 5.4425 \\
 \hline
RP4\_11S2xS2& 3179 & 3179  & 3179.00 & 3099.04 & 3105.62 \\ 
& -- & 1.2477  & 0.3071 & 6.8191 & 7.148 \\
\hline
SU2\_SO3& 1534 & 1534 & 1534.00 & 1474.70 & 1471.84 \\ 
 & -- & 0.5173 & 0.0453  & 1.2220 & 1.2677 \\
 \hline
\textbf{non\_PL}& 2608 & 2608 & 2608.0 & \textbf{1690.56} & 1539.16 \\ 
& -- & 1.1274  & 0.1274 & 2.8094 & 2.6714 \\
\hline
\end{tabular}
}

\caption{Comparison of algorithmic reduction methods for simplicial complexes, averaged over 100 iterations. \textbf{Bold} values highlight cases where significant reduction is achieved via strong internal collapses (compared to other methods). \textcolor{red}{Red} values indicate cases where significant reduction is only achieved through Whitehead collapses.}
\label{tab:simplicial_complex_reduction_100it}
\end{table}

\appendix
\section{Algorithms} \label{appendix}

This appendix contains the pseudocode of the main algorithms used  in the construction and computation of the critical poset associated with a strong internal core. 
An implementation in {\fontfamily{lmss}\selectfont SageMath} \cite{sagemath} can be found at \url{https://github.com/ximenafernandez/Strong-Morse-Theory}.

\begin{algorithm}[htb!]
\caption{The critical poset}
\label{alg:crit}
\begin{algorithmic}[1]
\Require Face poset $\mathcal{X}(K)$ of a simplicial complex $K$, set of critical simplices $C$, acyclic matching $M$ on $\mathcal{X}(K)$
\Ensure Critical poset $\Crit_M(K)$
\State $P \gets$ elements of $\mathcal{X}(K)$
\State $R \gets$ cover relations of $\mathcal{X}(K)$
\State Define a map $\QuotientClass : P \to \text{new elements}$
\ForAll{$(\sigma, \tau) \in M$}
    \State $P \gets P \setminus \{\sigma, \tau\}$
    \State $[\sigma \sim \tau] \gets$ new element representing $\{\sigma, \tau\}$
    \State $P \gets P \cup \{[\sigma \sim \tau]\}$
    \State $\QuotientClass[\sigma] \gets [\sigma \sim \tau]$
    \State $\QuotientClass[\tau] \gets [\sigma \sim \tau]$
\EndFor
\ForAll{$\sigma \in P$ such that $\QuotientClass[\sigma]$ is undefined}
    \State $\QuotientClass[\sigma] \gets \sigma$
\EndFor
\State $R' \gets \emptyset$
\ForAll{$(\sigma, \tau) \in R$}
    \If{$\QuotientClass[\sigma] \neq \QuotientClass[\tau]$}
        \State $R' \gets R' \cup \big(\QuotientClass[\sigma], \QuotientClass[\tau]\big)$
    \EndIf
\EndFor  
\State $\Loc_M(K) \gets$ poset with elements $P$ and relations $R'$
\State $\Crit_M(K) \gets$ subposet of $\Loc_M(K)$ induced by $C$
\State \Return $\Crit_M(K)$
\end{algorithmic}
\end{algorithm}

\begin{algorithm}[htb!]
\caption{A random strong internal core}
\label{alg:internal_core_computation}
\begin{algorithmic}
\Require $\X(K)$: the face poset of a simplicial complex $K$
\Ensure The critical poset of a random strong internal core of $K$
\\
\Function{StrongMorseReduction}{$X, C, M$} \\\Comment{{\footnotesize{Auxiliary function. Input: $X$: face poset, $M$: acyclic matching, $C$: critical elements}}}
    \If{$X = \emptyset$}
        \State \Return $(X, C, M)$
    \EndIf
    \State $S \gets \text{Shuffle}(\text{MinimalElements}(X))$ \Comment{{\footnotesize Randomized linear extension of minimal elements}}
    \For{$v \in S$}
        \If{there exists $a \in \text{MinimalElements}(X) \setminus \{v\}$ such that $a$ dominates $v$}
            \ForAll{$\sigma \in \{\sigma \in X\colon v \leq \sigma\}$ such that $a \nleq \sigma$}
                \State $M \gets M \cup \{ (\sigma, \{a\}\cup\sigma) \}$
            \EndFor
            \State $X \gets X \setminus \{ \tau \in X \colon v \leq \tau \}$
            \State \Return \Call{StrongMorseReduction}{$X, C, M$}
        \EndIf
    \EndFor
    \State $v \gets \text{First}(S)$ \Comment{{\footnotesize Take first non-dominated minimal element}}
    \State $C \gets C \cup \{\sigma \in X\colon v \leq \sigma\}$
    \State $X \gets X \setminus \{\sigma \in X\colon v \leq \sigma\}$
    \State \Return \Call{StrongMorseReduction}{$X, C, M$}
\EndFunction
\\
\Procedure{ComputeInternalStrongCore}{$\X(K)$}
    \State $(-, C, M) \gets$ \Call{StrongMorseReduction}{$\X(K), \emptyset, \emptyset$}
    \State \Return \Call{CriticalPoset}{$\X(K), C, M$}
\EndProcedure
\end{algorithmic}
\end{algorithm}

\begin{remark}
    The worst-case complexity of a naive implementation of Algorithm \ref{alg:internal_core_computation} is $O(V^3 \times F)$, where $V$ is the number of vertices of the simplicial complex $K$, and $F$ is the number of simplices of $K$. 
\end{remark}

\section*{Acknowledgments}

The author thanks Robert Green for his enthusiasm for posets and for pointing out the earlier work of Nicholas Scoville on strong Morse theory during his visit to Oxford. This work was inspired by fruitful conversations with him at the Mathematical Institute. The author is also grateful to Vidit Nanda for engaging with early versions of this project and for suggesting the connection with the localization of categories, and to Gabriel Minian for useful comments on the connection with Bestvina–Brady Morse theory, and to Eugenio Borghini for valuable discussions. 
This work was partially supported by Leverhulme Trust Research Project Grant RPG-2023-144 and the UK Centre for Topological Data Analysis EPSRC grant EP/R018472/1.


\bibliographystyle{plain}
\bibliography{biblio}
\end{document}